\newcommand{\rmd}{\mathrm{d}}
\newcommand{\kFr}{\mathfrak{k}}
\newcommand{\Jac}{\mathrm{Jac}}
\DeclareMathOperator{\wgt}{wgt}
\DeclareMathOperator{\Complex}{\mathbb{C}}
\numberwithin{equation}{section}
\newtheorem{theo}{Theorem}[section]
\newtheorem{prop}[theo]{Proposition}
\newtheorem{Lemma}[theo]{Lemma}
\theoremstyle{definition}
\newtheorem{rem}[theo]{Remark}
\newtheorem*{Aproblem}{Addition Problem}
\newtheorem*{Rproblem}{Reduction Problem}}
\begin{document}

\allowdisplaybreaks

\newcommand{\arXivNumber}{1912.13277}

\renewcommand{\PaperNumber}{053}

\FirstPageHeading

\ShortArticleName{Addition of Divisors on Hyperelliptic Curves via Interpolation Polynomials}

\ArticleName{Addition of Divisors on Hyperelliptic Curves \\ via Interpolation Polynomials}

\Author{Julia BERNATSKA~$^\dag$ and Yaacov KOPELIOVICH~$^\ddag$}

\AuthorNameForHeading{J.~Bernatska and Y.~Kopeliovich}

\Address{$^\dag$~National University of Kyiv-Mohyla Academy, 2 Skovorody Str., Kyiv, 04655, Ukraine}
\EmailD{\href{mailto:jbernatska@gmail.com}{jbernatska@gmail.com}}

\Address{$^\ddag$~University of Connecticut, 2100 Hillside Rd, Storrs Mansfield, 06269, USA}
\EmailD{\href{mailto:yaacov.kopeliovich@uconn.edu}{yaacov.kopeliovich@uconn.edu}}

\ArticleDates{Received February 05, 2020, in final form May 29, 2020; Published online June 14, 2020}

\Abstract{Two problems are addressed: reduction of an arbitrary degree non-special divisor to the equivalent divisor of the degree equal to genus of a curve, and addition of divisors of arbitrary degrees. The hyperelliptic case is considered as the simplest model. Explicit formulas defining reduced divisors for some particular cases are found. The reduced divisors are obtained in the form of solution of the Jacobi inversion problem which provides the way of computing Abelian functions on arbitrary non-special divisors.
An effective reduction algorithm is proposed, which has the advantage that it involves only arithmetic operations on polynomials. The proposed addition algorithm contains more details comparing with the known in cryptography, and is extended to divisors of arbitrary degrees comparing with the known in the theory of hyperelliptic functions.}

\Keywords{reduced divisor; inverse divisor; non-special divisor; generalised Jacobi inversion problem}

\Classification{32Q30; 14G50}

\section{Introduction}
In the paper we propose a method of adding arbitrary divisors on a hyperelliptic curve. We often refer to a non-special divisor, which we define by means of Riemann theorem, more accurate definition is given below in Section~\ref{ss:NSpDiv}. However, the method covers addition of special divisors as well.

Before stating the goal we introduce the notion of \emph{reduced divisor} corresponding to any non-special divisor on an algebraic curve. Suppose $g$ is genus of the curve. Any non-special divisor can be represented by a collection of points of number greater than or equal to the genus, that is $D=\sum\limits_{k=1}^{g+m} P_k$, $m \geqslant 0$. A \emph{reduced divisor} is a non-special divisor composed of $g$ points: $\widetilde{D}=\sum\limits_{k=1}^{g} P_k$.

It follows from the Riemann--Roch theorem that every non-special divisor of the form $D-(\deg D)\infty$ is equivalent to $\widetilde{D}-g \infty$, where $\widetilde{D}$ is a reduced divisor. This immediately leads to the following

\begin{Rproblem} Given a non-special divisor $D$ of degree~$g+m$, $m>0$, on an algebraic curve of genus $g$
find the corresponding reduced divisor $\widetilde{D}$ such that $D- {(g+m) \infty}$ is equivalent to $\widetilde{D} - g \infty$.
\end{Rproblem}

Addition of two special divisors can be considered as a reduction problem, if the sum forms a~non-special divisor. Non-special divisors are used in the problem statement to avoid ambiguity.

The reduction problem has a close relation to

\begin{Aproblem}Given two non-special divisors $D_1$ and $D_2$ of degrees $g+m_1$ and $g+m_2$, $m_1,m_2\geqslant 0$ respectively find a reduced divisor $\widetilde{D}$ such that $D_1+D_2 - (2g+m_1+m_2)\infty \sim \widetilde{D}-g\infty$.
\end{Aproblem}
Note that solving the reduction problem we immediately solve the addition problem since we can assume that $D_1$, $D_2$ together compose a non-special divisor $D$ of degree $2g+m_1+m_2$, and then come to the reduction problem for the new divisor $D$. On the other hand, the standard addition problem arises when the both divisors $D_1$, $D_2$ are firstly reduced to divisors $\widetilde{D}_1$, $\widetilde{D}_2$ of degree $g$ each, then addition of $\widetilde{D}_1$, $\widetilde{D}_2$ can be accomplished by any approach to addition law.

Much work has been done on the reduction problem starting with the classical work \cite{Ba}, where
the Jacobi inversion problem was solved in hyperelliptic case \cite[p.~32, Section~216]{Ba}
(briefly recalled in preliminaries). The Jacobi inversion problem is stated for a non-special divisor of degree~$g$, that is for a reduced divisor.
Points of the divisor serve as roots of two rational functions defined on a curve.
Coefficients of the rational functions are expressed through
multiply periodic functions $\wp$ evaluated at a point of Jacobian of the curve, and this point
corresponds to the divisor.
So a solution of the reduction problem coincides by the form with a solution of the Jacobi inversion problem,
which provides a method of solving the generalised Jacobi inversion problem
stated for divisor of degree greater than~$g$.

The first solution of reduction problem was given in~\cite{Ca}.
This algorithm was inspired by reduction of quadratic forms.
For low genera ($g=2,3$) many authors worked on giving more explicit solutions
to the reduction problem due to potential application in cryptography,
see~\cite{Ga} and~\cite{Su} and the literature cited there.
The explicit realisation of addition law should also have applications to the theory of heights on hyperelliptic Jacobians.
In \cite{Uc} $\wp$ functions are used to produce formulas for division polynomials on hyperelliptic curves
of low genera which was later applied in~\cite{DeJMu} to compute canonical
heights on genus $2$ curves. Though the question of division polynomials isn't treated explicitly
in the present paper it is strongly related to the reduction algorithm we propose as it is essentially
equivalent to reduced divisors of the form~$nP$ on the curve.

In preliminaries the Jacobi inversion problem is recalled and a detailed description of non-special
and special divisors is given.
The reduction problem is addressed in Sections~\ref{s:RedPr} and~\ref{s:RedAlg}.
The proposed method of reduction is based on the ideas of~\cite{bl05} and~\cite{ShK}.
A new and essential result achieved here consists in finding explicit functions
defining reduced divisors in some particular cases, which
are presented in Section~\ref{s:RedPr}.
An iterative reduction algorithm is given in Section~\ref{s:RedAlg},
as well as some comments on its application in cryptography.
Section~\ref{s:AddProc} is devoted to the addition problem.

In our setup we often suppose that points of divisors are known,
and we use them to construct polynomials and functions defining the divisors.
This approach guarantees that a divisor arising from the definition through polynomials
is located on the curve. On the other hand, the reader can forget that the polynomial coefficients were computed from points, and use symbolic notations for them. All operations are applicable to polynomials in this form as well.

In this paper we tie together two directions where addition of divisors was investigated:
hyperelliptic cryptography, and theory of Abelian functions on hyperelliptic curves.
The viewpoint of cryptographic applications describes divisors in terms of polynomials in quite an abstract manner, when the structure of divisors is left out of consideration.
Analysis of divisors by means of meromorphic functions on hyperelliptic curves helps a lot
in understanding a relation between the structure of a divisor and the form of functions which define it.
On the other hand, the addition law known in the theory of Abelian functions on hyperelliptic curves
is established for two non-special divisors of degree~$g$, which seems to be enough.
In the present paper we extend the addition law to non-special divisors of arbitrary degree.
Though we restrict our consideration to non-special divisors
and use reduction to divisors of degree~$g$, we provide some new results in
computation of Abelian functions on arbitrary non-special divisors, which arise from addition of divisors.

\section{Preliminaries}

\subsection{Hyperelliptic curve and Sato weights}
In the paper we deal with the family of hyperelliptic curves with a branch point at infinity.
A~genus $g$ curve is defined by the equation
\begin{gather}
 0=f(x,y) = -y^2 + \mathcal{P}(x) = - y^2 + \lambda_0 x^{2g+1} + \sum_{n=0}^{2g} \lambda_{4g+2-2n} x^n,\label{CHEeq1}
\end{gather}
where $\lambda_k$ are parameters of the curve, $\lambda_0=1$, and $x,y\in \Complex$.
We use Sato weights as indices, since
they are respected by the theory of rational functions, that simplifies many considerations.
Sato weight equals the opposite to the exponent of the leading term in the expansion near infinity.
Namely,
\begin{gather}\label{Param}
x=\xi^{-2}, \qquad y = \xi^{-2g-1}(1+O(\lambda)),
\end{gather}
where $\xi$ is a local parameter, and Sato weights of $x$ and $y$ are $\wgt x = 2$, $\wgt y = 2g+1$.
The weight is also assigned to every function, for example $f$ has weight $4g+2$.

\subsection{Jacobi inversion problem}
The Jacobi inversion problem gives the answer how to find $g$ points $\{(x_k,y_k)\}_{k=1}^g$
on a curve which unambiguously map into a point $u$ of Jacobian $\Jac$ of the curve.
Solution of the Jacobi inversion problem for a hyperelliptic curve is given by two rational functions
\begin{subequations}\label{JIP}
\begin{gather}
\mathcal{R}_{2g}(x;u) = x^g - \sum_{k=1}^g x^{g-k} \wp_{1,2k-1}(u),\\
\mathcal{R}_{2g+1}(x,y;u) = 2y + \sum_{k=1}^g x^{g-k} \wp_{1,1,2k-1}(u).
\end{gather}
\end{subequations}
Here multiply periodic $\wp$ functions as in \cite{Ba} are defined by
\begin{gather*}
 \wp_{i,j}(u) = - \frac{\partial^2}{\partial u_i \partial u_j} \log \sigma(u),\\
 \wp_{i,j,k}(u) = - \frac{\partial^3}{\partial u_i \partial u_j \partial u_k} \log \sigma(u)
\end{gather*}
through $g$-variable $\sigma$ function
which can be constructed by the method given in~\cite{BL2004}. More details
about rational functions can be found in \cite{bl05}.

Components of $u\in \Jac$ are indexed by Sato weights: $u=(u_1, u_3, \dots, u_{2g-1})$,
$\wgt u_n = -n$, and the standard holomorphic differentials are employed
\begin{gather*}
 \rmd u_{2k-1} = \frac{x^{g-k}\rmd x}{-2y},\qquad k=1,\dots,g.
\end{gather*}

The function $\mathcal{R}_{2g}$ is a polynomial in $x$ and has $g$ roots $\{x_k\}_{k=1}^g$.
At the same time, $\mathcal{R}_{2g}$ is rational on the curve~\eqref{CHEeq1} with $2g$ roots, namely,
$\{(x_k, y_k), (x_k,-y_k)\}_{k=1}^g$, where $\{y_k\}_{k=1}^g$ are defined by the function
$\mathcal{R}_{2g+1}$. That is divisor $D_g=\{(x_k,y_k)\}_{k=1}^{g}$ solves uniquely the following system
\begin{gather*}
 \mathcal{R}_{2g}(x;u) = 0,\qquad \mathcal{R}_{2g+1}(x,y;u) = 0,
\end{gather*}
and serves as the preimage of $u$
\begin{gather*}
\Jac \ni u=\sum_{k=1}^g \mathcal{A}(x_k,y_k)
\end{gather*}
under Abel's map
\begin{gather*}
\mathcal{A}(x,y) = \int_{\infty}^{(x,y)} \rmd u,
\end{gather*}
where $\rmd u = (\rmd u_1, \rmd u_3, \dots, \rmd u_{2g-1})^{\rm t}$.
As usual, $\mathcal{A}(D_g) = \sum\limits_{k=1}^g \mathcal{A}(x_k,y_k)$.

Note that $\mathcal{R}_{2g+1}$ has $2g+1$ roots on the curve, but only $g$ are common of two functions
$\mathcal{R}_{2g}$ and $\mathcal{R}_{2g+1}$.
Points $\{(x_k,-y_k)\}_{k=1}^{g}$ form a divisor which is inverse to $\{(x_k,y_k)\}_{k=1}^{g}$,
 and satisfy $\mathcal{R}_{2g+1}(x,-y;u) = 0$.

On the other hand, $\mathcal{R}_{2g}$ and $\mathcal{R}_{2g+1}$ can be obtained by determinant formulas
\begin{align*}
 &\mathcal{R}_{2g}(x,y;u) = x^g + \sum_{k=1}^g \alpha_{2g+2-2k} x^{k-1},&\\
 &\frac{1}{2}\mathcal{R}_{2g+1}(x,y;u) = y + \sum_{k=1}^g \beta_{2g+3-2k} x^{k-1},&
\end{align*}
similar to \eqref{RFHEeq1}, and coefficients $\alpha_n$ and $\beta_n$ are expressed in terms of
coordinates of points $\{(x_k,y_k)\}_{k=1}^{g}$.

\subsection{Non-special divisors}\label{ss:NSpDiv}
First, we recall the Riemann theorem, which is used here to define a non-special divisor.
Let $\omega$ and $\omega'$ be matrices of periods along the standard homology
$\mathfrak{a}$- and $\mathfrak{b}$-cycles, namely,
\begin{gather*}
\omega_{2k-1,j} = \oint_{\mathfrak{a}_j} \rmd u_{2k-1}, \qquad
\omega'_{2k-1,j} = \oint_{\mathfrak{b}_j} \rmd u_{2k-1},
\end{gather*}
where $k,j=1, \dots, g$, and $\rmd u_{2k-1}$ are holomorphic differentials introduced above.
In this notation $\theta\big(\omega^{-1} u; \omega^{-1}\omega'\big)$ is the Riemann theta function on Jacobian of
a curve defined by~\eqref{CHEeq1}. $K$ denotes the vector of Riemann constants. Then
\[ \theta\big(\mathcal{A}(P)-\mathcal{A}(D)-K\big)\]
as a function of a point $P$ vanishes if $D$ is a \emph{special divisor}, and has~$g$ roots if~$D$ is a \emph{non-special divisor}.

The same statement holds with multivariable $\sigma$-function, namely,
\[ \sigma\big(\mathcal{A}(P)-\mathcal{A}(D)\big)\]
as a function of $P$ vanishes if $D$ is a \emph{special divisor}, and has~$g$ roots if~$D$ is a \emph{non-special divisor}. Let $D_g=\sum\limits_{k=1}^g (x_k,y_k)$ and all points of $D_g$ are parameterised as
in~\eqref{Param} with parameters $\xi_k$. As shown in \cite[Theorem~2.7 in Russian version]{bl05}{\samepage
\begin{gather}\label{SigmaParam}
\sigma \big(\mathcal{A}(D_g)-\mathcal{A}(x(\xi),y(\xi))\big)
= \Bigg(\prod_{k=1}^g (\xi-\xi_k) \prod_{\substack{i,j=1\\ j>i}}^g (\xi_i+\xi_j) \Bigg)
\exp \big(H(-\xi,\xi_1,\dots,\xi_g;\lambda)\big),
\end{gather}
where $H(0;\lambda) = H(-\xi,\xi_1,\dots,\xi_g;0)=0$.}

From \eqref{SigmaParam} it is evident that a non-special divisor $D_g$ contains no pair of
points related by the hyperelliptic involution: $\xi_i=-\xi_j$. This is applicable to a degree $g$ divisor.
If a divisor $D_{g+m}$ of degree $g+m$ contains a pair in the hyperelliptic involution, we delete
this pair from the divisor, and do the same with all pairs in involution,
so obtain a truncated divisor $\widehat{D}_{g+m}$ equivalent to~$D_{g+m}$.
The divisor $D_{g+m}$ is \emph{non-special} if its truncated version~$\widehat{D}_{g+m}$ has a
degree equal to or greater than~$g$. In what follows divisors containing points in involution are not considered.

In a divisor $D_{g-n}$ of degree less than $g$,
say $g-n$ with $0<n<g$, the absent $n$ points are assigned to infinity, that is the corresponding parameters $\xi_k$
vanish. Putting $(x,y)$ to infinity, that is $\xi=0$, one computes $\sigma \big(\mathcal{A}(D_{g-n})\big)$,
and see that sigma function vanishes on a divisor of degree less than~$g$, and so $\wp$ functions are not defined on such a divisor. In this case non-vanishing derivatives of sigma function are used instead of $\wp$ functions. Formula~\eqref{SigmaParam} is also applicable to a divisor of degree greater than $g$, when the divisor is replaced by the equivalent reduced divisor.

\subsection{Special divisors}\label{ss:SpDiv}
In what follows special divisors are always considered as containing less than $g$ points of a curve.
\begin{prop}\label{p:HIdefDiv}
A divisor $D_m$ of degree $m$, $m<g$, is defined uniquely by the system
\begin{gather}\label{HIdefDiv}
\mathcal{H}(x)=0,\qquad y=\mathcal{I}(x)
\end{gather}
with polynomials $\mathcal{H}$ of degree $m$ and $\mathcal{I}$ of degree $m-1$ or greater, the both
vanishing on $D_m$.
\end{prop}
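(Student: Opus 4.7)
The proof will split into two directions: constructing polynomials $\mathcal{H}$ and $\mathcal{I}$ from a given $D_m$, and recovering $D_m$ from the system \eqref{HIdefDiv}. Writing $D_m=\sum_{k=1}^{m}(x_k,y_k)$, the assumption made earlier that $D_m$ contains no pair of points in the hyperelliptic involution forces the $x$-coordinates $x_1,\dots,x_m$ to be pairwise distinct, with a Weierstrass point $(x_k,0)$ appearing at most once. This distinctness is exactly what makes interpolation on the $x$-line well-posed, and will drive both halves of the argument.

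For existence I would take $\mathcal{H}(x)=\prod_{k=1}^{m}(x-x_k)$, a monic polynomial of degree exactly $m$ vanishing on the $x$-coordinates of $D_m$, and define $\mathcal{I}(x)$ by Lagrange interpolation through the pairs $(x_k,y_k)$, obtaining a polynomial satisfying $\mathcal{I}(x_k)=y_k$ for every $k$. The degree condition ``$\deg\mathcal{I}\geqslant m-1$'' in the statement will be explained by the observation that $\mathcal{I}+Q\mathcal{H}$ satisfies the same interpolation conditions for any polynomial $Q$, since $\mathcal{H}$ vanishes at the interpolation nodes; by choosing a suitable $Q$ one boosts the degree of $\mathcal{I}$ to the required level without changing the interpolation data.

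For uniqueness I would run the construction in reverse: given any $(\mathcal{H},\mathcal{I})$ satisfying the stated degree conditions and both vanishing on $D_m$, the polynomial $\mathcal{H}$ of degree $m$ has at most $m$ roots in the $x$-line, so its zero set must coincide with $\{x_1,\dots,x_m\}$. Above each $x_k$ the curve equation \eqref{CHEeq1} supplies two points $(x_k,\pm y_k)$, and the equation $y=\mathcal{I}(x_k)=y_k$ isolates exactly one of them, namely the one belonging to $D_m$. Hence the common solution locus of \eqref{HIdefDiv} on the curve is precisely the set of points of $D_m$.

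\textbf{Main obstacle.} The argument is essentially Lagrange interpolation combined with the two-sheeted structure of the curve, so no deep difficulty is expected. The only cases requiring extra care are Weierstrass points, where $+y_k$ and $-y_k$ coincide and the selection step is automatic, and repeated points of the divisor, where Lagrange interpolation must be upgraded to Hermite-type interpolation so that $\mathcal{H}$ carries a root of the appropriate multiplicity at $x_k$ and $\mathcal{I}$ matches the Taylor expansion of the $y$-branch along the curve to the corresponding order; the same recovery argument then still goes through.
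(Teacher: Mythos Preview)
Your proposal is correct and matches the paper's approach: the paper's proof is a single sentence declaring the result evident when the points are given and pointing to determinant formulas (as in \eqref{RFHEeq1}) for the construction, which is exactly your Lagrange interpolation written in another form. You supply considerably more detail than the paper does, including the uniqueness direction and the Hermite-type adjustment for repeated points, but the underlying idea is the same.
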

The proof is evident if the points are given. For example one can use determinant formulas to
construct such polynomials, at that $\mathcal{I}$ has degree $m-1$.

Now consider the system \eqref{HIdefDiv} in detail.
Equation $\mathcal{H}(x)=0$ defines $2m$ points of the curve,
namely $\{(x_k,y_k),(x_k,-y_k)\}_{k=1}^{m}$ such that $x_k$ are roots of $\mathcal{H}$.
Coordinates $y_k$ can be found from the equation of the curve~\eqref{CHEeq1},
but only $m$ points are contained in $D_m$. These points are singled out by
the second equation of the system~\eqref{HIdefDiv}. It represents a rational function
of weight $2g+1$, equal to $\wgt y$, if $\deg \mathcal{I} \leqslant g$, and so has $2g+1$ roots on the curve, $m$ of which are common with~$\mathcal{H}$. Polynomial $\mathcal{I}$ of degree greater than $g$
is also eligible.

Obviously, a divisor with points in involution can
not be covered by the definition~\eqref{HIdefDiv} since the equation for~$y$ is linear and defines only one
value of~$y$ for each~$x$.

The reduction problem is not applicable to special divisors, which are in the most reduced form.
However one can consider the addition problem for two or more special divisors, and the addition algorithm
given in Section~\ref{s:AddProc} is applicable to such divisors as well.

\section{Addition on a curve}\label{s:RedPr}

\subsection[Reduction of $g+1$ degree divisor]{Reduction of $\boldsymbol{g+1}$ degree divisor}\label{ss:Divg1}

\begin{theo}\label{T:Red1}
Let $\widetilde{D}$ be a divisor of degree $g$ such that
$\widetilde{D} - g\infty$ is equivalent to $D_{g+1}-(g+1)\infty$, where
$D_{g+1}=\sum\limits_{k=1}^{g+1} P_k$ is a non-special
divisor, and $\{P_k=(x_k,y_k)\}_{k=1}^{g+1}$ are distinct points.
Then~$\widetilde{D}$ is defined by the system
\begin{subequations}\label{JIPg1}
\begin{gather}\label{JIPg1HI}
 \mathcal{H}(x) = 0,\qquad y= - \mathcal{I}(x),
\end{gather}
where
\begin{gather}\label{JIPg1H}
 \mathcal{H}(x) = - \frac{1}{2} \sum_{\substack{k,l=1\\ l\neq k}}^{g+1}
 \frac{(y_k - y_l)^2}{(x_k-x_l)^2}
 \prod_{\substack{i=1\\ i\neq k,l}}^{g+1} \frac{(x-x_i)}{(x_l-x_i)(x_k-x_i)}
 + \sum_{n=0}^{g} x^n \sum_{j=0}^{g-n} \lambda_{2g-2n-2j} h_{j},
\end{gather}
\begin{gather}\label{JIPg1I}
 \mathcal{I}(x) = \sum_{k=1}^{g+1} y_k \frac{\prod\limits_{i=1, i\neq k}^{g+1}(x-x_i)
 - \mathcal{H}(x)}{\prod\limits_{i=1, i\neq k}^{g+1}(x_k-x_i)},
\end{gather}
\end{subequations}
and $h_n$ denotes the complete symmetric polynomial of degree $n$ in $\{x_k\}_{k=1}^{g+1}$.
\end{theo}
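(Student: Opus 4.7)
The plan is to exhibit a rational function on the curve whose divisor witnesses the desired equivalence, and to read $\widetilde D$ off its $g$ ``extra'' zeros. A natural candidate is $\phi(P)=y-L(x)$, where $L(x)$ is the Lagrange interpolation polynomial of degree $\leqslant g$ with $L(x_k)=y_k$ for $k=1,\dots,g+1$.

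First I would observe that by Sato weights $\phi$ has a single pole at infinity, of order $2g+1$ (the weight of $y$ strictly exceeds that of any polynomial in $x$ of degree $\leqslant g$), hence exactly $2g+1$ zeros on the curve. The interpolation property forces the $g+1$ points of $D_{g+1}$ to be among these zeros, leaving a residual effective divisor $E$ of degree $g$ whose points lie on the branch $y=L(x)$. Thus $(\phi)=D_{g+1}+E-(2g+1)\infty$, so $D_{g+1}+E\sim(2g+1)\infty$. Combined with $E+E^{*}\sim 2g\infty$, obtained by summing the principal divisors of $x-x'$ over the roots $x'$ of $\mathcal H$ (with $E^{*}$ the hyperelliptic conjugate of $E$), this yields $D_{g+1}-(g+1)\infty\sim E^{*}-g\infty$, so $\widetilde D=E^{*}$.

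Next I would describe $E^{*}$ explicitly: its $x$-coordinates are the roots of
\[
\mathcal H(x)\;:=\;\frac{\mathcal P(x)-L(x)^2}{\prod_{k=1}^{g+1}(x-x_k)},
\]
which is a polynomial, since the numerator vanishes at each $x_k$ because $\mathcal P(x_k)=y_k^2=L(x_k)^2$, and is monic of degree $g$; the corresponding $y$-coordinates are $-L(x)$ at those roots. To match \eqref{JIPg1HI} it then suffices to verify that $\mathcal I$ coincides with $L$ modulo $\mathcal H$. Expanding \eqref{JIPg1I} immediately gives
\[
\mathcal I(x)=L(x)-\mathcal H(x)\sum_{k=1}^{g+1}\frac{y_k}{\prod_{i\neq k}(x_k-x_i)},
\]
so $-\mathcal I(x)=-L(x)$ at each root of $\mathcal H$, as required.

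The most laborious step is the identification of the quotient $\mathcal H$ with the closed form \eqref{JIPg1H}. I would carry this out by partial fractions. The polynomial part of $\mathcal P(x)/\prod(x-x_k)$ is read off from the generating identity $\prod(1-x_k t)^{-1}=\sum_n h_n t^n$ and reproduces the second sum in \eqref{JIPg1H}. Writing $L^2=\sum_k y_k^2\ell_k^2+2\sum_{k<l}y_k y_l\ell_k\ell_l$ with Lagrange basis $\ell_k$, the off-diagonal terms $\ell_k\ell_l/\prod(x-x_a)$ reduce to the polynomial $\prod_{i\neq k,l}(x-x_i)/(D_k D_l)$, where $D_k=\prod_{i\neq k}(x_k-x_i)$, while the diagonal terms $y_k^2\ell_k^2/\prod(x-x_a)$ contribute simple poles at $x_k$ whose residues cancel those of $\mathcal P/\prod(x-x_k)$, leaving a polynomial remainder. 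A symmetrisation using $(y_k-y_l)^2=y_k^2-2y_k y_l+y_l^2$ together with $D_k D_l=-(x_k-x_l)^2\prod_{i\neq k,l}(x_k-x_i)(x_l-x_i)$ reassembles both contributions into the double sum of \eqref{JIPg1H}. I expect this algebraic bookkeeping to be the main obstacle; the divisor-theoretic skeleton is essentially immediate from the weight count.
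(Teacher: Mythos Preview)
Your approach is essentially that of the paper: both build the Lagrange interpolant $L=\mathcal{G}$ through the $g+1$ points, set $\mathcal{H}=(\mathcal{P}-L^2)/\mathcal{F}$ and $\mathcal{I}=L-\mathfrak{g}_0\mathcal{H}$ with $\mathfrak{g}_0$ the leading coefficient of $L$, and identify $\widetilde{D}$ with the hyperelliptic conjugate of the $g$ residual zeros of $y-L(x)$. The only tactical difference is in extracting the closed form \eqref{JIPg1H}: where you propose partial fractions and must track the polynomial remainder coming from the diagonal terms $y_k^2\ell_k^2/\mathcal{F}$, the paper inserts the identity $1=\big(\sum_k L_k(x)\big)^2$ to write $\mathcal{P}-L^2=\sum_{k\neq j}(y_k^2-y_ky_j)L_kL_j+\sum_k(\mathcal{P}(x)-\mathcal{P}(x_k))L_k$, each summand already manifestly divisible by $\mathcal{F}$, so the symmetrisation to $(y_k-y_l)^2$ and the identification of the $\mathcal{Q}$-part via \eqref{xProd} fall out without separately computing those remainders.
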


\begin{proof}
Define a rational function $\mathcal{R}_{2g+1}$ with $g+1$ fixed roots at
points $\{P_k=(x_k,y_k)\}_{k=1}^{g+1}$ by the determinant formula
\begin{gather}\label{RFHEeq1}
 \begin{vmatrix}
 1 & x & x^2 & \cdots & x^g & y \\
 1 & x_1 & x_1^2 & \cdots & x_1^g & y_1 \\
 1 & x_2 & x_2^2 & \cdots & x_2^g & y_2 \\
 \vdots & \vdots & \vdots & \ddots & \vdots & \vdots \\
 1 & x_{g+1} & x_{g+1}^2 & \cdots & x_{g+1}^g & y_{g+1}
 \end{vmatrix} = 0,
\end{gather}
which can be written in a different form
\begin{gather}\label{yDef1}
 y = \mathcal{G}(x)
\end{gather}
with the help of interpolation polynomial $\mathcal{G}$ of degree $g$
\begin{gather}\label{GDef1}
 \mathcal{G}(x) = \sum_{k=1}^{g+1} y_k L_k(x),
 \end{gather}
where $L_k$ have the form of Lagrange interpolating polynomials, namely
\begin{gather}
 L_k(x) = \prod_{i=1, i\neq k}^{g+1} \frac{(x-x_i)}{(x_k-x_i)}. \label{Ldef}
\end{gather}
Note that
\begin{gather*}
\sum_{k=1}^{g+1} L_k(x) = 1.
\end{gather*}

Intersection of \eqref{yDef1} with the curve
produces the unknown $g$ roots of $\mathcal{R}_{2g+1}$.
So, substitute~\eqref{GDef1} for $y$ into~\eqref{CHEeq1}, and take into account that
$L_k(x) L_j(x)$ with $k\neq j$ is divisible by $\mathcal{F}(x)=\prod\limits_{n=1}^{g+1} (x-x_n)$
\begin{gather*}
- \mathcal{G}(x)^2 + \mathcal{P}(x) =
-\Bigg(\sum_{k=1}^{g+1} y_k L_k(x)\Bigg)^2 + \mathcal{P}(x) \Bigg(\sum_{k=1}^{g+1} L_k(x)\Bigg)^2\\
\hphantom{- \mathcal{G}(x)^2 + \mathcal{P}(x)}{}
 = \sum_{k,j=1,\,k\neq j}^{g+1} \big(\mathcal{P}(x) - y_k y_j\big) L_k(x) L_j(x)
 + \sum_{k=1}^{g+1} \big(\mathcal{P}(x) - \mathcal{P}(x_k)\big) L_k(x)^2 \\
\hphantom{- \mathcal{G}(x)^2 + \mathcal{P}(x)}{}
 = \sum_{k,j=1,\,k\neq j}^{g+1} \big(\mathcal{P}(x_k) - y_k y_j\big) L_k(x) L_j(x)
 + \sum_{k=1}^{g+1} \big(\mathcal{P}(x) - \mathcal{P}(x_k)\big) L_k(x) \\
\hphantom{- \mathcal{G}(x)^2 + \mathcal{P}(x)}{}
 = \mathcal{F}(x) \Bigg(\sum_{k,j=1,\,k\neq j}^{g+1} \frac{y_k^2 - y_k y_j}
 {\prod\limits_{i=1, i\neq k}^{g+1} (x_k-x_i)\prod\limits_{i=1, i\neq j}^{g+1} (x_j-x_i)}
 \prod\limits_{\substack{i=1\\ i\neq k,j}}^{g+1} (x-x_i)\\
\hphantom{- \mathcal{G}(x)^2 + \mathcal{P}(x)=}{}
 + \sum_{k=1}^{g+1}\frac{1}{\prod\limits_{i=1, i\neq k}^{g+1} (x_k-x_i)}
 \dfrac{\mathcal{P}(x) - \mathcal{P}(x_k)}{x-x_k} \Bigg),
\end{gather*}
with an arbitrary natural $N$ it is straightforward to check
\begin{gather}\label{xProd}
 \sum_{k=1}^{N} \frac{x_k^{n}}{\prod\limits_{j=1,j\neq k}^N (x_k-x_j)} = h_{n-N+1},
\end{gather}
where $h_n$ is the complete symmetric polynomial of degree $n$ in $\{x_k\}_{k=1}^N$,
and $h_{n} = 0$ as $n<0$. Then
one finds
\begin{gather*}
 \mathcal{Q}(x) = \sum_{k=1}^{g+1} \frac{1}{\prod\limits_{\substack{i=1\\ i\neq k}}^{g+1} (x_k-x_i)}
 \dfrac{\mathcal{P}(x) - \mathcal{P}(x_k)}{x-x_k}
 = \sum_{n=0}^{g} x^n \sum_{j=0}^{g-n} \lambda_{2g-2n-2j} h_{j}.
\end{gather*}
Finally,
\begin{gather*}%\label{RatHg1}
 \mathcal{H}(x) = - \frac{1}{2} \sum_{\substack{k,l=1\\ l\neq k}}^{g+1}
 \frac{(y_k - y_l)^2}{(x_k-x_l)^2}
 \prod_{\substack{i=1\\ i\neq k,l}}^{g+1} \frac{(x-x_i)}{(x_l-x_i)(x_k-x_i)}
 + \mathcal{Q}(x).
\end{gather*}
Note that coefficient at $x^g$ is $\mathfrak{h}_0 = \lambda_0=1$, which arises from $\mathcal{Q}$.

Polynomial $\mathcal{H}$ has $g$ roots, say $\{\tilde{x}_k\}_{k=1}^g$, and points
$\big\{\big(\tilde{x}_k,\tilde{y}_k=\mathcal{G}(\tilde{x}_k)\big)\big\}_{k=1}^g$
give the unknown~$g$ roots of $\mathcal{R}_{2g+1}$
defined by \eqref{RFHEeq1}. Let $\mathfrak{g}_0$ be the coefficient of $\mathcal{G}$ at $x^g$, then
\begin{gather*}%\label{RatIg1}
 \mathcal{I}(x) = \mathcal{G}(x) - \frac{\mathfrak{g}_0}{\mathfrak{h}_0 } \mathcal{H}(x)
 = \sum_{k=1}^{g+1} y_k \frac{\prod\limits_{i=1, i\neq k}^{g+1}(x-x_i)
 - \mathcal{H}(x)}{\prod\limits_{i=1, i\neq k}^{g+1}(x_k-x_i)}.
\end{gather*}
Polynomial $\mathcal{I}$ has degree $g-1$, and $y- \mathcal{I}(x)$
vanishes at the same $g$ points as $\mathcal{H}(x)$. These points
$\big\{\big(\tilde{x}_k,\tilde{y}_k\big)\big\}_{k=1}^g$ map into $-u\in \Jac$,
which is inverse to the Abel image $u$ of the reduced divisor $\widetilde{D} = \sum\limits_{k=1}^g \widetilde{P}_k$. Therefore, the reduced divisor $\widetilde{D}$ corresponding to $D_{g+1}$
consists of points $\big\{\widetilde{P}_k=\big(\tilde{x}_k,-\tilde{y}_k\big)\big\}$.
\end{proof}

\begin{rem}\label{wpFuncg1}System \eqref{JIPg1} coincides with the solution of Jacobi inversion problem given by the rational functions \eqref{JIP}, namely
\begin{gather*}
 \mathcal{R}_{2g}(x;u) = \mathcal{H}(x),\qquad
 \mathcal{R}_{2g+1}(x,y;u) = 2y + 2\mathcal{I}(x).
\end{gather*}
Therefore, polynomials $\mathcal{H}$ and $\mathcal{I}$ allow to compute $\wp$ functions at divisor $D_{g+1}$.
\end{rem}

\subsection[Reduction of $g+2$ degree divisor]{Reduction of $\boldsymbol{g+2}$ degree divisor}\label{ss:Divg2}
\begin{theo}\label{T:Red2}
Let $\widetilde{D}$ be a divisor of degree $g$ such that
$\widetilde{D} - g\infty$ is equivalent to $D_{g+2}-(g+2)\infty$, where
$D_{g+2}=\sum\limits_{k=1}^{g+2} P_k$ is a non-special
divisor, and $\{P_k=(x_k,y_k)\}_{k=1}^{g+2}$ are distinct points. Then~$\widetilde{D}$
is defined by the system
\begin{subequations}\label{JIPg2}
\begin{gather}
 \mathcal{H}(x) = 0,\qquad y=- \mathcal{I}(x),
\end{gather}
where
\begin{gather}
 \mathcal{H}(x) = - \frac{1}{2} \sum_{\substack{k,l=1\\ l\neq k}}^{g+2}
 \frac{(y_k - y_l)^2}{(x_k-x_l)^2}
 \prod_{\substack{i=1\\ i\neq k,l}}^{g+2} \frac{(x-x_i)}{(x_l-x_i)(x_k-x_i)}
 + \sum_{n=0}^{g-1} x^n \sum_{j=0}^{g-1-n} \lambda_{2g-2-2n-2j} h_{j},
\\
\mathcal{I}(x) = \frac{1}{\mathfrak{h}_0} \Bigg(\mathfrak{g}_0 \bigg( \frac{1}{\mathfrak{h}_0} \mathcal{H}(x) -
x \sum_{n=0}^{g-1} x^n \sum_{j=0}^{g-1-n} \lambda_{2g-2-2n-2j} h_{j} \bigg) \nonumber\\
\hphantom{\mathcal{I}(x) =}{} + \frac{1}{2} \sum_{\substack{k,l=1\\ l\neq k}}^{g+2} \sum_{\substack{n=1\\ n\neq k,l}}^{g+2}
 \frac{y_n (y_k - y_l)^2}
{\prod\limits_{\substack{j=1\\ j\neq n}}^{g+2}(x_n-x_j)
 \prod\limits_{\substack{i=1\\ i\neq l}}^{g+2} (x_l-x_i) \prod_{\substack{i=1\\ i\neq k}}^{g+2}(x_k-x_i)} \nonumber \\ \hphantom{\mathcal{I}(x) =}{} \times
 \Big((x_n-x_k-x_l) \bigg(x\prod_{\substack{j=1\\ j\neq n,k,l}}^{g+2}(x-x_j)
 - \frac{1}{\mathfrak{h}_0} \mathcal{H}(x)\bigg)+
x_k x_l \prod_{\substack{j=1\\ j\neq n,k,l}}^{g+2}(x-x_j)\Big)\nonumber\\
\hphantom{\mathcal{I}(x) =}{} - \sum_{\substack{k,l=1\\ l\neq k}}^{g+2} \frac{(y_k - y_l)^2 x_l y_k }
{\prod\limits_{\substack{i=1\\ i\neq l}}^{g+2} (x_l-x_i) \prod\limits_{\substack{i=1\\ i\neq k}}^{g+2} (x_k-x_i)^2}
 \bigg(\prod_{\substack{j=1\\ j\neq k,l}}^{g+2}(x-x_j) - \frac{1}{\mathfrak{h}_0} \mathcal{H}(x)\bigg)
\Bigg),\label{JIPg2I}
\end{gather}
\end{subequations}
where $\mathfrak{h}_0$ and $\mathfrak{g}_0$ are defined by \eqref{h0} and \eqref{g0}, and $h_n$ denotes the complete symmetric polynomial of degree $n$ in $\{x_k\}_{k=1}^{g+2}$.
\end{theo}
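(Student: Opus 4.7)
The strategy mirrors the proof of Theorem~\ref{T:Red1}. First I interpolate through the $g+2$ given points, obtaining
\[
\mathcal{G}(x) = \sum_{k=1}^{g+2} y_k L_k(x), \qquad L_k(x) = \prod_{\substack{i=1\\ i\neq k}}^{g+2}\frac{x-x_i}{x_k-x_i},
\]
a polynomial of degree $g+1$ with $\mathcal{G}(x_k) = y_k$; equivalently, $\mathcal{G}$ arises from the $(g+3)\times(g+3)$ analogue of the determinant formula \eqref{RFHEeq1}. Denote its leading coefficient at $x^{g+1}$ by $\mathfrak{g}_0$.

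Next, substituting $y=\mathcal{G}(x)$ into \eqref{CHEeq1} yields $\mathcal{P}(x)-\mathcal{G}(x)^2$, a polynomial of degree $2g+2$ vanishing at every $x_k$. I carry out the same algebra as in the proof of Theorem~\ref{T:Red1}: using $\sum_k L_k(x)=1$ to symmetrise and splitting the result into an off-diagonal sum $\sum_{k\neq l}(\mathcal{P}(x_k)-y_ky_l)L_kL_l$ manifestly divisible by $\mathcal{F}(x)=\prod_{i=1}^{g+2}(x-x_i)$, together with a diagonal sum $\sum_k(\mathcal{P}(x)-\mathcal{P}(x_k))L_k$ each of whose terms is likewise divisible by $\mathcal{F}$. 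This gives $\mathcal{P}(x)-\mathcal{G}(x)^2 = \mathcal{F}(x)\mathcal{H}(x)$ with $\deg \mathcal{H}=g$. Invoking the identity \eqref{xProd}, now with $N=g+2$, collapses the diagonal piece into the $\lambda$-polynomial $\sum_n x^n\sum_j \lambda_{2g-2-2n-2j}h_j$ appearing in the statement, while the off-diagonal piece reorganises into the $(y_k-y_l)^2$ double sum; its leading coefficient at $x^g$ is $\mathfrak{h}_0$.

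The main new ingredient relative to Theorem~\ref{T:Red1} is the extraction of $\mathcal{I}$. Since $\deg \mathcal{G}=g+1$ and $\deg \mathcal{H}=g$, polynomial division now demands a linear quotient: write $\mathcal{G}(x) = (\mathfrak{g}_0 x + b)\mathcal{H}(x)/\mathfrak{h}_0 + \mathcal{I}(x)$, with $b$ fixed by the requirement that $\mathcal{G}-(\mathfrak{g}_0/\mathfrak{h}_0)x\mathcal{H}$ carry no $x^g$ term, so that $\deg \mathcal{I}\leqslant g-1$. The expected bottleneck is the bookkeeping here: expanding this difference, repeatedly applying \eqref{xProd} to collapse the symmetric sums in $\{x_k\}$, and regrouping the $y_k y_l$-weighted terms together with the $y_k$-terms generated by $b$ until what remains matches the rather involved expression \eqref{JIPg2I}. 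Once $\mathcal{I}$ has been identified in this form, the argument concludes exactly as in Theorem~\ref{T:Red1}: on the zero set of $\mathcal{H}$ the difference $\mathcal{G}(x)-\mathcal{I}(x)$ vanishes, so the $g$ points $(\tilde{x}_k,\mathcal{I}(\tilde{x}_k))$ are cut out by $y=\mathcal{G}(x)$ and map to $-u\in\Jac$; applying the hyperelliptic involution gives the reduced divisor $\widetilde{D}$, which is therefore defined by the system $\mathcal{H}(x)=0$, $y=-\mathcal{I}(x)$.
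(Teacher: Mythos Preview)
Your proposal is correct and follows essentially the same route as the paper: construct the Lagrange interpolant $\mathcal{G}$ of degree $g+1$ (equivalently via the determinant \eqref{RFHEeq2}), factor $\mathcal{P}-\mathcal{G}^2=\mathcal{F}\mathcal{H}$ using the same diagonal/off-diagonal decomposition and the identity \eqref{xProd} with $N=g+2$, and then obtain $\mathcal{I}$ as the remainder of $\mathcal{G}$ modulo $\mathcal{H}$ with a linear quotient. The paper makes the linear quotient explicit as $\frac{\mathfrak{g}_0}{\mathfrak{h}_0}x - \frac{\mathfrak{g}_0 \mathfrak{h}_1}{\mathfrak{h}_0^2} + \frac{\mathfrak{g}_1}{\mathfrak{h}_0}$ (recording $\mathfrak{h}_1$, $\mathfrak{g}_1$ and noting $\mathfrak{h}_0\neq 0$), whereas you leave this as an unspecified constant $b$; otherwise the arguments coincide, and neither carries out the final simplification to \eqref{JIPg2I} in full.
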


\begin{proof}
Define a rational function $\mathcal{R}_{2g+2}$ with $g+2$ fixed roots at
points $\{P_k=(x_k,y_k)\}_{k=1}^{g+2}$ by
\begin{gather}\label{RFHEeq2}
 \begin{vmatrix} 1 & x & x^2 & \cdots & x^g & y & x^{g+1} \\
 1 & x_1 & x_1^2 & \cdots & x_1^g & y_1 & x_1^{g+1} \\
 1 & x_2 & x_2^2 & \cdots & x_2^g & y_2 & x_2^{g+1} \\
 \vdots & \vdots & \vdots & \ddots & \vdots & \vdots & \vdots \\
 1 & x_{g+2} & x_{g+2}^2 & \cdots & x_{g+2}^g & y_{g+2} & x_{g+2}^{g+1}
 \end{vmatrix} = 0.
\end{gather}
Interpolation polynomial $\mathcal{G}$ such that \eqref{RFHEeq2}
acquires the form $y=\mathcal{G}(x)$,
is defined by
\begin{gather}\label{GDef2}
 \mathcal{G}(x) = \sum_{k=1}^{g+2} y_k L_k(x),
\end{gather}
where
\begin{gather*}
 L_k(x) = \prod_{i=1, i\neq k}^{g+2} \frac{(x-x_i)}{(x_k-x_i)}.
\end{gather*}

Intersection of $y=\mathcal{G}(x)$ with the curve produces the unknown $g$ roots of $\mathcal{R}_{2g+2}$.
Substitute~\eqref{GDef2} for $y$ into \eqref{CHEeq1} and divide by $\mathcal{F}(x) = \prod\limits_{n=1}^{g+2} (x-x_n)$.
Computation similar to that given in Section~\ref{ss:Divg1} leads to
\begin{gather*}
 \mathcal{H}(x) = - \frac{1}{2} \sum_{\substack{k,l=1\\ l\neq k}}^{g+2}
 \frac{(y_k - y_l)^2}{(x_k-x_l)^2}
 \prod_{\substack{i=1\\ i\neq k,l}}^{g+2} \frac{(x-x_i)}{(x_l-x_i)(x_k-x_i)}
 + \sum_{n=0}^{g-1} x^n \sum_{j=0}^{g-1-n} \lambda_{2g-2-2n-2j} h_{j}.%\label{RatHg2}
\end{gather*}
In this case coefficient at $x^g$ is
\begin{subequations}
\begin{gather}\label{h0}
 \mathfrak{h}_0 = - \frac{1}{2} \sum_{\substack{k,l=1\\ l\neq k}}^{g+2}
 \frac{(y_k - y_l)^2}{(x_k-x_l)^2}
\frac{1}{ \prod\limits_{\substack{i=1\\ i\neq k,l}}^{g+2} (x_l-x_i)(x_k-x_i)} ,
\end{gather}
which does not vanish, and the coefficient at $x^{g-1}$ is
\begin{gather}
 \mathfrak{h}_1 = \lambda_0 + \frac{1}{2} \sum_{\substack{k,l=1\\ l\neq k}}^{g+2}
 \frac{(y_k - y_l)^2}{(x_k-x_l)^2}
\frac{\sum\limits_{\substack{i=1\\ i\neq k,l}}^{g+2} x_i}
{\prod\limits_{\substack{i=1\\ i\neq k,l}}^{g+2} (x_l-x_i)(x_k-x_i)}.
\end{gather}
\end{subequations}

Let $\{\tilde{x}_k\}_{k=1}^g$ be roots of polynomial $\mathcal{H}$,
then $\{\big(\tilde{x}_k,\tilde{y}_k=\mathcal{G}(\tilde{x}_k)\big)\}_{k=1}^g$
are the unknown $g$ roots of $\mathcal{R}_{2g+2}$.
In this case $\mathcal{G}$ is a polynomial of degree $g+1$, and with the help of polynomial $\mathcal{H}$
it is reduced to $\mathcal{I}$ of degree $g-1$, namely,
\begin{gather*}
 \mathcal{I}(x) = \mathcal{G}(x) - \left(\frac{\mathfrak{g}_0}{\mathfrak{h}_0}x
 - \frac{\mathfrak{g}_0 \mathfrak{h}_1}{\mathfrak{h}_0^2} + \frac{\mathfrak{g}_1}{\mathfrak{h}_0}\right) \mathcal{H}(x),
\end{gather*}
where
\begin{subequations}
\begin{gather}
 \mathfrak{g}_0 = \sum_{n=1}^{g+2} \frac{y_n}{\prod\limits_{j=1, j\neq n}^{g+2}(x_n-x_j)}, \label{g0} \\
 \mathfrak{g}_1= - \sum_{n=1}^{g+2} \frac{y_n \sum\limits_{j=1, j\neq n}^{g+2} x_j }{\prod\limits_{j=1, j\neq n}^{g+2}(x_n-x_j)}.
\end{gather}
\end{subequations}
The expression for $\mathcal{I}$ is simplified to \eqref{JIPg2I}.

Finally, the reduced divisor $\widetilde{D}$ corresponding to $D_{g+2}$
consists of points $\big\{\widetilde{P}_k=\big(\tilde{x}_k,-\tilde{y}_k\big)\big\}$.
\end{proof}

\begin{rem}\label{wpFuncg2}
Similarly to the case of $g+1$ points, system \eqref{JIPg2} coincides with the solution of Jacobi inversion problem~\eqref{JIP}, and coefficients of polynomials $\mathcal{H}$ and~$\mathcal{I}$ provide
values of $\wp$ functions at divisor $D_{g+2}$.
\end{rem}

\subsection[Reduction of $g+1$ degree divisor with duplication]{Reduction of $\boldsymbol{g+1}$ degree divisor with duplication}\label{ss:Divg1Dupl}

\begin{theo}\label{T:Red3}
Let $\widetilde{D}$ be a divisor of degree $g$ such that
$\widetilde{D} - g\infty$ is equivalent to $D_{g+1}-(g+1)\infty$, where
$D_{g+1}=2 P_1 + \sum\limits_{k=2}^{g} P_k$ is a non-special divisor, and $\{P_k=(x_k,y_k)\}_{k=1}^g$
are distinct points. Then~$\widetilde{D}$ is defined by the system
\begin{subequations}\label{JIPg1Dupl}
\begin{gather}
 \mathcal{H}(x) = 0,\qquad y= - \mathcal{I}(x),
\end{gather}
where
\begin{gather}
\mathcal{H}(x) = \sum_{n=0}^g x^n \sum_{j=0}^{g-n}
\lambda_{2g-2n-2j} h_{j}\big|_{x_{g+1}\to x_1} \nonumber\\
\hphantom{\mathcal{H}(x) =}{} - \frac{\mathcal{P}'(x_1) }{\prod\limits_{i=2}^{g} (x_1-x_i)^2}
 \Bigg( \frac{\prod\limits_{i=2}^{g} (x-x_i)-\prod\limits_{i=2}^{g} (x_1-x_i)}{(x-x_1)}
- \prod_{i=2}^{g} (x-x_i) \sum_{\iota =2}^g (x_1- x_\iota)^{-1} \Bigg) \nonumber\\
\hphantom{\mathcal{H}(x) =}{} - \frac{ \mathcal{P}'(x_1) }{2y_1 \prod\limits_{i=2}^{g} (x_1-x_i)}
 \Bigg(\sum_{k =2}^{g}
 \frac{2 y_k \prod\limits_{\substack{i=1 \\ i\neq k}}^{g} (x-x_i)}{(x_k-x_1) \prod\limits_{\substack{i=1 \\ i\neq k}}^{g} (x_k-x_i)}
 + \frac{\mathcal{P}'(x_1)}{2y_1} \frac{\prod\limits_{i=2}^{g} (x-x_i)}{\prod\limits_{i=2}^{g} (x_1-x_i)} \Bigg) \nonumber\\
\hphantom{\mathcal{H}(x) =}{} + \sum_{j=2}^{g} \frac{(y_1 - y_j)^2\prod\limits_{\substack{i=2 \\ i\neq j}}^{g} (x-x_i)}
 {(x_j-x_1) \prod\limits_{\substack{i=1 \\ i\neq j}}^{g} (x_j-x_i)\prod\limits_{i=2}^{g} (x_1-x_i)}
 \bigg(1 - (x-x_1) \sum_{\iota =2}^g (x_1 - x_\iota )^{-1} \bigg) \nonumber\\
\hphantom{\mathcal{H}(x) =}{} + \sum_{\substack{k,j=2\\ k\neq j}}^{g}
 \frac{(y_k^2 - y_k y_j) (x-x_1)^2 \prod\limits_{\substack{i=2\\ i\neq k,j}}^{g} (x-x_i) }
 {(x_k-x_1)(x_j-x_1)\prod\limits_{\substack{i=1 \\ i\neq k}}^{g} (x_k-x_i)
 \prod\limits_{\substack{i=1 \\ i\neq j}}^{g} (x_j-x_i)},\label{JIPg1HDupl}
\\
 \mathcal{I}(x) = \frac{(x-x_1)\prod\limits_{i=2}^{g} (x-x_i) - \mathcal{H}(x)}{\prod\limits_{i=2}^{g} (x_1-x_i)}
 \bigg(\frac{\mathcal{P}'(x_1)}{2y_1} - y_1 \sum_{\iota=2}^g (x_1-x_\iota)^{-1} \bigg)\nonumber\\
\hphantom{\mathcal{I}(x) =}{} + y_1 \prod_{i=2}^{g} \frac{x-x_i}{x_1-x_i}
+ \sum_{k=2}^{g} y_k \frac{(x-x_1)\prod\limits_{\substack{i=1\\ i\neq k}}^{g}(x-x_i) - \mathcal{H}(x)}
{(x_k-x_1)\prod\limits_{\substack{i=1\\ i\neq k}}^{g}(x_k-x_i)},\label{JIPg1IDupl}
\end{gather}
\end{subequations}
and $h_n$ denotes the complete symmetric polynomial of degree $n$ in $\{x_k\}_{k=1}^{g+1}$.
\end{theo}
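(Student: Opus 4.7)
The plan is to run essentially the same argument as in Theorem~\ref{T:Red1}, but with Lagrange interpolation replaced by Hermite interpolation at the double point $P_1$. Concretely, I construct a polynomial $\mathcal{G}$ of degree~$g$ satisfying $\mathcal{G}(x_k)=y_k$ for $k=1,\dots,g$ together with the tangency condition $\mathcal{G}'(x_1)=\mathcal{P}'(x_1)/(2y_1)$ obtained by implicit differentiation of $y^2=\mathcal{P}(x)$ at~$P_1$. These $g+1$ conditions determine $\mathcal{G}$ uniquely and make the affine line $y=\mathcal{G}(x)$ cut the curve with multiplicity~$2$ at $P_1$ and multiplicity~$1$ at each of the remaining $P_k$, so it meets the curve in $g$ further points that furnish the new divisor.

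The explicit Hermite basis to use is
\begin{gather*}
 \mathcal{G}(x) = y_1\!\prod_{i=2}^g \frac{x-x_i}{x_1-x_i}
 +\Bigl(\tfrac{\mathcal{P}'(x_1)}{2y_1}-y_1\!\sum_{\iota=2}^g(x_1-x_\iota)^{-1}\Bigr)
 \frac{(x-x_1)\prod_{i=2}^g(x-x_i)}{\prod_{i=2}^g(x_1-x_i)}\\
 \qquad+\sum_{k=2}^g y_k\,\frac{(x-x_1)\prod_{i=1,i\neq k}^g(x-x_i)}{(x_k-x_1)\prod_{i=1,i\neq k}^g(x_k-x_i)},
\end{gather*}
so that the value and derivative prescriptions at $x_1$ are handled by the first two terms and the residual conditions at $x_2,\dots,x_g$ by the third. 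This already motivates the appearance of $\mathcal{P}'(x_1)/(2y_1)$ and of the sum $\sum_\iota (x_1-x_\iota)^{-1}$ throughout the statement.

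Next I substitute $y=\mathcal{G}(x)$ into the curve equation \eqref{CHEeq1} and consider $\mathcal{P}(x)-\mathcal{G}(x)^2$. By construction this polynomial of degree $2g+1$ vanishes to order~$2$ at $x_1$ and order~$1$ at each $x_k$, $k\ge2$, so it is divisible by $\mathcal{F}(x)=(x-x_1)^2\prod_{i=2}^g(x-x_i)$, and the quotient $\mathcal{H}(x)$ is a polynomial of degree~$g$. The bookkeeping splits naturally as in the proof of Theorem~\ref{T:Red1}: the cross products $L_k(x)L_j(x)$ (with $k\neq j$) are divisible by $\mathcal{F}$, giving the $(y_k-y_l)^2$ and $(y_k^2-y_ky_j)$ sums in \eqref{JIPg1HDupl}; the diagonal contributions $\mathcal{P}(x)-\mathcal{P}(x_k)$ reduce via the identity \eqref{xProd} to the polynomial $\sum_n x^n\sum_j \lambda_{\cdot\cdot}h_j$ with $h_n$ now the complete symmetric polynomial in $\{x_1,x_1,x_2,\dots,x_g\}$ (notationally $h_j|_{x_{g+1}\to x_1}$); and the additional Hermite terms at $x_1$ contribute the explicit $\mathcal{P}'(x_1)$ pieces.

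Finally, having $\mathcal{H}$ of degree~$g$ with roots $\{\tilde x_k\}_{k=1}^g$ and $\tilde y_k=\mathcal{G}(\tilde x_k)$, I obtain $\mathcal{I}$ by reducing $\mathcal{G}$ modulo $\mathcal{H}$ to push its degree below~$g$, which amounts to subtracting a scalar multiple of $\mathcal{H}$ determined by the leading coefficient of $\mathcal{G}$; writing this cancellation explicitly in terms of the basis above yields \eqref{JIPg1IDupl}. Since $\{(\tilde x_k,\tilde y_k)\}$ maps to $-u\in\Jac$, the reduced divisor is $\{(\tilde x_k,-\tilde y_k)\}$, defined by $\mathcal{H}(x)=0$, $y=-\mathcal{I}(x)$. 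The main obstacle is the combinatorial bookkeeping: organising the cross terms of $\mathcal{G}(x)^2$ and the tangency-induced contributions so that they collapse into the closed form displayed in \eqref{JIPg1HDupl}--\eqref{JIPg1IDupl}, which is a nontrivial but mechanical extension of the argument used in Theorem~\ref{T:Red1}.
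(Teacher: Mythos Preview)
Your proposal is correct and follows essentially the same route as the paper: the paper also constructs $\mathcal{G}$ as the Hermite interpolant (framed there via the determinant \eqref{RFHEeq3} and the modified Lagrange basis $\widetilde{L}_k$, which unwinds to exactly your formula for $\mathcal{G}$), substitutes into the curve, divides by $\mathcal{F}(x)=(x-x_1)^2\prod_{i=2}^g(x-x_i)$, and uses the $x_{g+1}\to x_1$ limit of \eqref{xProd} together with $2y_1y_1'=\mathcal{P}'(x_1)$ to organise the cross-terms into \eqref{JIPg1HDupl}, then sets $\mathcal{I}=\mathcal{G}-\mathfrak{g}_0\mathcal{H}$. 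The only cosmetic difference is that the paper presents the Hermite basis as limits of the ordinary Lagrange polynomials $L_k$ from Theorem~\ref{T:Red1}, whereas you write the same polynomial directly.
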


\begin{proof}Define a rational function $\mathcal{R}_{2g+1}$ with $g+1$ fixed roots at
points $2P_1+ \sum\limits_{k=2}^g P_k$, $P_k=(x_k,y_k)$, by the determinant formula
\begin{gather}\label{RFHEeq3}
 \begin{vmatrix}
 1 & x & x^2 & \cdots & x^g & y \\
 1 & x_1 & x_1^2 & \cdots & x_1^g & y_1 \\
 1 & x_2 & x_2^2 & \cdots & x_2^g & y_2 \\
 \vdots & \vdots & \vdots & \ddots & \vdots & \vdots \\
 1 & x_{g} & x_{g}^2 & \cdots & x_{g}^g & y_{g} \\
 0 & 1 & 2 x_1 & \cdots & g x_{1}^{g-1} & y_1'
 \end{vmatrix} = 0,
\end{gather}
where $y_1'$ denotes $\rmd y_{1}/\rmd x_1$, at that $P_1=(x_1,y_1)$ is a point of the curve
defined by \eqref{CHEeq1}. Rewri\-te~\eqref{RFHEeq3} as $y=\mathcal{G}(x)$,
and find the interpolation polynomial
\begin{gather}\label{GDef3}
 \mathcal{G}(x) = \sum_{k=1}^{g} y_k \widetilde{L}_k(x)+ y_1' \widetilde{L}_{g+1}(x),
\end{gather}
where
\begin{gather*}
 \widetilde{L}_1(x) = (x-x_1)^2 \frac{\rmd}{\rmd x_1}
\Bigg( \frac{1}{(x-x_1)\prod\limits_{i=2}^{g} (x_1-x_i)} \Bigg)\prod_{i=2}^{g} (x-x_i), \\
\widetilde{L}_k(x) = \frac{(x-x_1)^2}{(x_k-x_1)^2}\prod_{i=2, i\neq k}^{g} \frac{x-x_i}{x_k-x_i},
\qquad 2 \leqslant k \leqslant g,\\
 \widetilde{L}_{g+1}(x) = (x-x_1) \prod_{i=2}^{g} \frac{x-x_i}{x_1-x_i}.
\end{gather*}
Polynomials $\widetilde{L}_k(x)$ relate to $L_k(x)$ defined by \eqref{Ldef} as follows
\begin{gather*}
\widetilde{L}_1(x) = \lim_{x_{g+1} \to x_1} \big(L_1(x) + L_{g+1}(x) \big),\\
\widetilde{L}_k(x) = \lim_{x_{g+1} \to x_1} L_k(x), \qquad 2 \leqslant k \leqslant g,\\
\widetilde{L}_{g+1}(x) = \lim_{x_{g+1} \to x_1} \frac{-L_{g+1}(x)}{\rmd \log L_{g+1}(x)/\rmd x_{g+1}}.
\end{gather*}
Note that
\begin{gather*}
\sum_{k=1}^{g} \widetilde{L}_k(x) = 1.
\end{gather*}

Solutions of $y=\mathcal{G}(x)$ which are on the curve
define the unknown $g$ roots of $\mathcal{R}_{2g+1}$.
Substitute~\eqref{GDef3} for $y$ into~\eqref{CHEeq1}, and take into account that
$\widetilde{L}_k(x) \widetilde{L}_j(x)$ with $k,j=1,\dots,g$, $k\neq j$, and $\widetilde{L}_k(x) \widetilde{L}_{g+1}(x)$
with $k=2,\dots,g$, and $\widetilde{L}_{g+1}(x)^2$ are divisible
by $\mathcal{F}(x)= (x-x_1)^2 \prod\limits_{n=2}^{g} (x-x_n)$. Actually,
\begin{gather*}
- \mathcal{G}(x)^2 + \mathcal{P}(x) =
- \bigg(\sum_{k=1}^{g} y_k \widetilde{L}_k(x) + y_1' \widetilde{L}_{g+1}(x) \bigg)^2
+ \mathcal{P}(x) \bigg(\sum_{k=1}^{g} \widetilde{L}_k(x)\bigg)^2\\
\hphantom{- \mathcal{G}(x)^2 + \mathcal{P}(x)}{}
 = \sum_{k=1}^{g} \big(\mathcal{P}(x) - \mathcal{P}(x_k)\big) \widetilde{L}_k(x)
 - (y'_1)^2 \widetilde{L}_{g+1}(x)^2 - 2 \sum_{k =1}^{g} y_k y'_1 \widetilde{L}_k(x) \widetilde{L}_{g+1}(x) \\
\hphantom{- \mathcal{G}(x)^2 + \mathcal{P}(x)=}{}
 + \sum_{\substack{k,j=1\\ k\neq j}}^{g} \big(\mathcal{P}(x_k) - y_k y_j\big) \widetilde{L}_k(x) \widetilde{L}_j(x)\\
\hphantom{- \mathcal{G}(x)^2 + \mathcal{P}(x)}{}
 = \mathcal{F}(x) \Bigg( \frac{\rmd}{\rmd x_1} \left(\frac{\mathcal{P}(x) - \mathcal{P}(x_1)}{x-x_1} \frac{1}{A(x_1)} \right)\\
\hphantom{- \mathcal{G}(x)^2 + \mathcal{P}(x)=}{}
 + \sum_{k=2}^{g} \frac{\mathcal{P}(x) - \mathcal{P}(x_k)}{(x-x_k)}
 \frac{1}{(x_k-x_1) \prod\limits_{\substack{i=1 \\ i\neq k}}^g (x_k-x_i)}\\
 \hphantom{- \mathcal{G}(x)^2 + \mathcal{P}(x)=}{}
 - \frac{\mathcal{P}'(x_1) }{A(x_1)^2} \bigg( \frac{A(x)-A(x_1)}{(x-x_1)}
 - A(x) \frac{\rmd}{\rmd x_1} \log A(x_1) \bigg) \\
 \hphantom{- \mathcal{G}(x)^2 + \mathcal{P}(x)=}{}
 - \frac{ \mathcal{P}'(x_1) }{2y_1A(x_1)} \bigg(\sum_{k =2}^{g}
 \frac{2 y_k \prod\limits_{\substack{i=1 \\ i\neq k}}^{g} (x-x_i)}{(x_k-x_1) \prod\limits_{\substack{i=1 \\ i\neq k}}^{g} (x_k-x_i)}
 + \frac{\mathcal{P}'(x_1)}{2y_1} \frac{A(x)}{A(x_1)} \bigg) \\
 \hphantom{- \mathcal{G}(x)^2 + \mathcal{P}(x)=}{}
 + \sum_{j=2}^{g} \frac{(y_1 - y_j)^2 \prod\limits_{\substack{i=2 \\ i\neq j}}^{g} (x-x_i)}
 {(x_j-x_1) \prod\limits_{\substack{i=1 \\ i\neq j}}^{g} (x_j-x_i)A(x_1)}
 \bigg(1- (x-x_1) \frac{\rmd}{\rmd x_1} \log A(x_1) \bigg) \\
 \hphantom{- \mathcal{G}(x)^2 + \mathcal{P}(x)=}{}
 + \sum_{\substack{k,j=2\\ k\neq j}}^{g}
 \frac{(y_k^2 - y_k y_j) (x-x_1)^2 \prod\limits_{\substack{i=2\\ i\neq k,j}}^{g} (x-x_i) }
 {(x_k-x_1)(x_j-x_1)\prod\limits_{\substack{i=1 \\ i\neq k}}^{g} (x_k-x_i)
 \prod\limits_{\substack{i=1 \\ i\neq j}}^{g} (x_j-x_i)}\Bigg),
\end{gather*}
where we denote $A(x) = \prod\limits_{i=2}^g (x-x_i)$, and use relation $2y_1 y_1' = \mathcal{P}'(x_1)$.
In the above computation we used the following relations
\begin{gather*}
\widetilde{L}_1(x) = \frac{A(x)}{A(x_1)} \left(1- (x-x_1) \frac{\rmd}{\rmd x_1} \log A(x_1) \right),\qquad
\widetilde{L}_{g+1}(x) = (x-x_1) \frac{A(x)}{A(x_1)}
\end{gather*}
to obtain
\begin{gather*}
\big(\mathcal{P}(x) - \mathcal{P}(x_1)\big) \widetilde{L}_1(x)
 - 2 y_1 y'_1 \widetilde{L}_1(x) \widetilde{L}_{g+1}(x)
= (x-x_1)^2 A(x) \\
\qquad{} \times
\left( \frac{\rmd}{\rmd x_1} \bigg(\frac{\mathcal{P}(x) - \mathcal{P}(x_1)}{x-x_1}
 \frac{1}{A(x_1)} \bigg)
 - \frac{\mathcal{P}'(x_1) }{A(x_1)^2} \left( \frac{A(x)-A(x_1)}{(x-x_1)}
 - A(x) \frac{\rmd}{\rmd x_1} \log A(x_1) \right) \right).
\end{gather*}

Taking the limit of \eqref{xProd} as $x_{g+1} \to x_1$ one finds
\begin{gather*}
\frac{\rmd}{\rmd x_1} \left(\frac{x_1^{n}}{A(x_1)}\right) +
\sum_{k=2}^{g} \frac{x_k^n}{(x_k-x_1) \prod\limits_{\substack{i=1\\ i\neq k}}^{n} (x_k-x_i)}
= \lim_{x_{g+1}\to x_1} h_{n-g} ,
\end{gather*}
where $h_n$ is the complete symmetric polynomial of degree $n$ in $\{x_k\}_{k=1}^{g+1}$,
and $h_{n-g}=0$ as $n<g$.
Then
\begin{gather}
 \frac{\rmd}{\rmd x_1} \Bigg(\frac{\mathcal{P}(x) - \mathcal{P}(x_1)}{x-x_1} \frac{1}{\prod\limits_{i=2}^g (x_1-x_i)} \Bigg)
 + \sum_{k=2 }^{g} \frac{\mathcal{P}(x) - \mathcal{P}(x_k)}{(x-x_k)}
 \frac{1}{(x_k-x_1)^2 \prod\limits_{i=2, i\neq k}^g (x_k-x_i)}\nonumber\\
 \qquad{} =\sum_{n=0}^g x^n \sum_{j=0}^{g-n} \lambda_{2g-2n-2j} h_j \big|_{x_{g+1}\to x_1}.\label{QpolyDupl}
\end{gather}
Taking into account that
\begin{gather*}
\frac{\rmd}{\rmd x_1} \log A(x_1) = \sum_{i=2}^g \frac{1}{x_1-x_i} ,
\end{gather*}
one obtains polynomial $\mathcal{H}$ as in \eqref{JIPg1HDupl}.
Note that coefficient at $x^g$ is $\mathfrak{h}_0=\lambda_0=1$, which arises from \eqref{QpolyDupl}.

Let $\{\tilde{x}_k\}_{k=1}^g$ be roots of polynomial $\mathcal{H}$, then points
$\big\{\big(\tilde{x}_k,\tilde{y}_k=\mathcal{G}(\tilde{x}_k)\big)\big\}_{k=1}^g$, where $\mathcal{G}$ is defined by \eqref{GDef3}, are the unknown $g$ roots of $\mathcal{R}_{2g+1}$ which is
defined by~\eqref{RFHEeq3}. Let $\mathfrak{g}_0$ be the coefficient of $\mathcal{G}$ at $x^g$,
namely,
\begin{gather*}
\mathfrak{g}_0 = \frac{1}{A(x_1)} \left( y'_1 - y_1 \sum_{\iota=2}^g (x_1-x_\iota)^{-1} \right)
+ \sum_{k=2}^{g} \frac{y_k }{(x_k-x_1)\prod\limits_{\substack{i=1\\ i\neq k}}^{g}(x_k-x_i)},
\end{gather*}
 then polynomial
\begin{gather*}
 \mathcal{I}(x) = \mathcal{G}(x) - \frac{\mathfrak{g}_0}{\mathfrak{h}_0} \mathcal{H}(x)
\end{gather*}
is of degree $g-1$. Then $y = \mathcal{I}(x)$ and $\mathcal{H}(x)=0$ define points
$\big\{\big(\tilde{x}_k,\tilde{y}_k\big)\big\}_{k=1}^g$ which map into $-u\in \Jac$,
the inverse to the Abel image $u$ of reduced divisor $\widetilde{D}$.
Therefore, the reduced divisor~$\widetilde{D}$ corresponding to $2P_1+ \sum\limits_{k=2}^g P_k$
consists of points $\big\{\widetilde{P}_k=\big(\tilde{x}_k,-\tilde{y}_k\big)\big\}$.
\end{proof}

\subsection[Reduction of divisor $(g+1)P$]{Reduction of divisor $\boldsymbol{(g+1)P}$}\label{ss:Divg1P}
\begin{theo}\label{T:Red4}
Let $\widetilde{D}$ be a divisor of degree $g$ such that
$\widetilde{D} - g\infty$ is equivalent to $D_{g+1}-(g+1)\infty$, where
$D_{g+1}=(g+1) P_1$ with non-branch point $P_1=(x_1,y_1)$. Then~$\widetilde{D}$ is defined by the system
\begin{subequations}\label{JIPg1Mult}
\begin{gather}
 \mathcal{H}(x) = 0,\qquad y= - \mathcal{I}(x),
\end{gather}
where
\begin{gather}\label{JIPg1HMult}
 \mathcal{H}(x) = (x-x_1)^g + 2 \sum_{j=0}^{g-1} (x-x_1)^j \sum_{i=0}^{j}
 \frac{y_1^{(i)} y_1^{(j+g+1-i)} }{i! (j+g+1-i)!} ,
\\
\label{JIPg1IMult}
 \mathcal{I}(x) = \sum_{n=0}^{g-1} (x-x_1)^n \left( \frac{y_1^{(n)}}{n!} -
 2 \frac{y_1^{(g)}}{g!} \sum_{i=0}^{n}
 \frac{y_1^{(i)} y_1^{(n+g+1-i)} }{i! (n+g+1-i)!} \right),
\end{gather}
\end{subequations}
and $y_1^{(n)}$ is found from
\begin{gather*}
 \mathcal{P}^{(n)}(x_1) = \sum_{k=0}^n \frac{n!}{k!(n-k)!} y_1^{(k)} y_1^{(n-k)}.
\end{gather*}
\end{theo}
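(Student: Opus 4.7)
The plan is to follow the strategy of Theorems~\ref{T:Red1}--\ref{T:Red3} in the fully confluent limit where all $g+1$ points of $D_{g+1}$ coincide at $P_1$. In that limit the determinant \eqref{RFHEeq1} becomes a Vandermonde--Hermite determinant and the interpolation polynomial $y=\mathcal{G}(x)$ collapses to the degree-$g$ Taylor polynomial
$$
 \mathcal{G}(x) = \sum_{n=0}^{g} \frac{y_1^{(n)}}{n!}(x-x_1)^n,
$$
which is the unique polynomial of degree $g$ such that $y-\mathcal{G}(x)$ vanishes to order $g+1$ at $x=x_1$. The derivatives $y_1^{(n)}=\rmd^n y_1/\rmd x_1^n$ are determined recursively by differentiating $y_1^2=\mathcal{P}(x_1)$ and applying Leibniz's rule, which yields precisely the recurrence displayed at the end of the statement.

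As in Sections~\ref{ss:Divg1} and~\ref{ss:Divg1Dupl}, the reduced divisor is cut out on the curve by the residual intersection identity $-\mathcal{G}(x)^2+\mathcal{P}(x)=\mathcal{F}(x)\mathcal{H}(x)$ with $\mathcal{F}(x)=(x-x_1)^{g+1}$. I would compute $\mathcal{P}-\mathcal{G}^2$ coefficient-wise at $x_1$: Leibniz's rule gives
$$
 \frac{\mathcal{P}^{(m)}(x_1)}{m!}=\sum_{k=0}^{m}\frac{y_1^{(k)}y_1^{(m-k)}}{k!(m-k)!},
$$
whereas the coefficient of $(x-x_1)^m$ in $\mathcal{G}(x)^2$ is the same sum but restricted by $0\le k,m-k\le g$. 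For $m\le g$ both sums coincide, so $(x-x_1)^{g+1}$ divides $\mathcal{P}-\mathcal{G}^2$. For $m=j+g+1$ with $0\le j\le g-1$, the missing $k$-range $[0,j]\cup[g+1,j+g+1]$ is symmetric under $k\leftrightarrow m-k$, so the difference collapses to $2\sum_{k=0}^{j}y_1^{(k)}y_1^{(j+g+1-k)}/(k!(j+g+1-k)!)$, which produces the inner coefficients of \eqref{JIPg1HMult}. The top coefficient $\mathfrak{h}_0=1$ comes from $m=2g+1$, where $\mathcal{G}^2$ contributes nothing and $\mathcal{P}^{(2g+1)}(x_1)/(2g+1)!=\lambda_0=1$. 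Dividing by $\mathcal{F}(x)$ yields $\mathcal{H}$ in the form \eqref{JIPg1HMult}.

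To extract $\mathcal{I}$ I would reduce $\mathcal{G}$ modulo $\mathcal{H}$: with leading coefficients $\mathfrak{g}_0=y_1^{(g)}/g!$ and $\mathfrak{h}_0=1$, the polynomial $\mathcal{I}(x)=\mathcal{G}(x)-\mathfrak{g}_0\mathcal{H}(x)$ has degree at most $g-1$, and reading off its coefficient at $(x-x_1)^n$ for $n=0,\dots,g-1$ produces \eqref{JIPg1IMult} directly. The concluding step is the same as in Theorem~\ref{T:Red1}: the $g$ common roots $\{\tilde{x}_k\}$ of $\mathcal{H}(x)=0$ and $y=\mathcal{G}(x)$ on the curve exhaust the residual intersection, hence Abel-map to $-u\in\Jac$ where $u=\mathcal{A}(\widetilde{D})$, so negating the $y$-coordinates yields $\widetilde{D}$ via $y=-\mathcal{I}(x)$.

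The main obstacle is essentially bookkeeping in the middle step: one must verify carefully that the constraint $0\le k,m-k\le g$ excludes precisely the symmetric pair of tails that together account for the factor of $2$ in \eqref{JIPg1HMult}, and that the confluent Taylor polynomial is indeed the correct limit of the Hermite interpolant in Theorem~\ref{T:Red3}. Once these points are in place, no new conceptual ingredient is needed beyond substituting Taylor data for the Lagrange/Hermite data used previously.
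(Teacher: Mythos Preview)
Your proposal is correct and follows essentially the same route as the paper's proof: the paper also writes the confluent determinant \eqref{RFHEeq4} as $y=\mathcal{G}(x)$ with $\mathcal{G}$ the degree-$g$ Taylor polynomial, computes $\mathcal{P}(x)-\mathcal{G}(x)^2$ via Leibniz (organised there as $\mathcal{G}(x)^2=\sum_{k=0}^{2g}\frac{(x-x_1)^k}{k!}\frac{\rmd^k y_1^2}{\rmd x_1^k}-2\sum_{k=g+1}^{2g}\sum_{i=0}^{k-g-1}\frac{(x-x_1)^k}{i!(k-i)!}y_1^{(i)}y_1^{(k-i)}$, which is your symmetric-tail observation in slightly different packaging), divides by $(x-x_1)^{g+1}$, and then sets $\mathcal{I}=\mathcal{G}-\frac{y_1^{(g)}}{g!}\mathcal{H}$. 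No step differs in substance.
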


\begin{proof}
Define a rational function $\mathcal{R}_{2g+1}$ with $g+1$ fixed roots at
points $(g+1) P_1$ with $P_1=(x_1, y_1)$, by the determinant formula
\begin{gather}\label{RFHEeq4}
 \begin{vmatrix}
 1 & x & x^2 & \cdots & x^g & y \\
 1 & x_1 & x_1^2 & \cdots & x_1^g & y_1 \\
 0 & 1 & 2 x_1 & \cdots & g x_1^{g-1} & y_1' \\
 \vdots & \vdots & \vdots & \ddots & \vdots & \vdots \\
 0 & 0& 0 & \cdots & g! x_1 & y_{1}^{(g-1)} \\
 0 & 0 & 0 & \cdots & g! & y_{1}^{(g)}
 \end{vmatrix} = 0,
\end{gather}
where $y_1^{(n)}$ denotes $\rmd^n y_1/\rmd x_1^n$.
Then the interpolation polynomial $\mathcal{G}$ such that \eqref{RFHEeq4}
acquires the form $y=\mathcal{G}(x)$ is defined by
\begin{gather}\label{GDef4}
 \mathcal{G}(x) = \sum_{n=0}^{g} \frac{1}{n!} (x-x_1)^n y_1^{(n)}.
\end{gather}

Points $(x,y)$ of the curve \eqref{CHEeq1}
satisfying $y=\mathcal{G}(x)$ are roots of $\mathcal{R}_{2g+1}$.
To find them substitu\-te~\eqref{GDef4} for $y$ into~\eqref{CHEeq1}, and
take into account that
\begin{gather*}
\mathcal{G}(x)^2 = \sum_{k=0}^{2g} \frac{1}{k!} (x-x_1)^k \frac{\rmd^k y_1^2}{\rmd x_1^k}
 - 2 \sum_{k=g+1}^{2g} \sum_{i=0}^{k-g-1} \frac{ (x-x_1)^k}{i! (k-i)!} y_1^{(i)} y_1^{(k-i)},
\end{gather*}
and $y_1^2 = \mathcal{P}(x_1)$.
Evidently,
\begin{gather*}
-\mathcal{G}(x)^2 + \mathcal{P}(x)
 = (x-x_1)^{2g+1} + 2 \sum_{k=g+1}^{2g} \sum_{i=0}^{k-g-1} \frac{ (x-x_1)^k}{i! (k-i)!} y_1^{(i)} y_1^{(k-i)} \\
 \hphantom{-\mathcal{G}(x)^2 + \mathcal{P}(x)}{}
 = (x-x_1)^{g+1} \left((x-x_1)^g + 2 \sum_{k=g+1}^{2g} \sum_{i=0}^{k-g-1}
 \frac{ (x-x_1)^{k-g-1}}{i! (k-i)!} y_1^{(i)} y_1^{(k-i)}\right)
 \end{gather*}
 is divisible by $\mathcal{F}(x)= (x-x_1)^{g+1}$.
 Therefore, $\mathcal{H}$ is defined by~\eqref{JIPg1HMult}, and coefficient at~$x^g$ is~$1$. Then
\begin{gather*}
 \mathcal{I}(x) = \mathcal{G}(x) - \frac{y_1^{(g)}}{g!} \mathcal{H}(x),
\end{gather*}
and one comes to \eqref{JIPg1IMult}.
\end{proof}

\subsection[Reduction of $g+m$ degree divisor]{Reduction of $\boldsymbol{g+m}$ degree divisor}\label{ss:Divgm}

Here we propose a solution of the reduction problem for a divisor of degree greater than $g+2$.
Let a divisor $D_{g+m} = \sum\limits_{k=1}^{g+m} (x_k,y_k)$ be defined by a system
\begin{gather}\label{FRFefgm}
\mathcal{F}(x) = 0,\qquad \mathcal{R}(x,y) =0,
\end{gather}
where $\mathcal{F}$ is a polynomial of degree $g+m$, and a
rational function $\mathcal{R}$ of weight $2g+m$ has the form
\begin{gather}\label{R2gm}
\mathcal{R}(x,y) = y \gamma_y(x) + \gamma_x(x),\qquad
\deg \gamma_y = [(m-1)/2],\qquad \deg \gamma_x = g+[m/2], \end{gather}
where $[\cdot]$ denotes the integer part.
If points of $D_{g+m}$ are known, $\mathcal{R}$
can be represented through a~determinant similar to \eqref{RFHEeq2}
constructed from the first $g+m+1$ elements in the list of monomials
\begin{equation*}
\big\{1, x, \dots, x^g, y, x^{g+1}, yx, \dots, x^{g+[m/2]}, y x^{[m/2]}, \dots\big\}.
\end{equation*}
Let $\kFr=[(m-1)/2]$, then
\begin{gather*}
\gamma_y(x) = \sum_{l_\kFr > \dots > l_1=1}^{g+m}
\left(\prod_{\iota=1}^{\kFr} y_{l_\iota} \right) M_{l_1,\dots,l_\kFr}(x), \\
\gamma_x(x) = \sum_{l_{\kFr+1} > \dots > l_1=1}^{g+m} (-1)^{\kFr+1}
\left( \prod_{\iota=1}^{\kFr+1} y_{l_\iota} \right) N_{l_1,\dots,l_{\kFr+1}}(x),
\end{gather*}
where $M_{l_1,\dots,l_\kFr}$ and $N_{l_1,\dots,l_{\kFr+1}}$ with repeated indices vanish, and
\begin{gather*}
M_{l_1,\dots,l_\kFr}(x) = \frac{1}{\kFr!} \prod_{\iota=1}^{\kFr} \frac{(x-x_{l_\iota})}
{\prod\limits_{\substack{i=1,\\ i\neq l_1,\dots, l_\kFr}}^{g+m} (x_{l_\iota}-x_i)}, \\
N_{l_1,\dots,l_{\kFr+1}}(x) = \frac{1}{(\kFr+1)!}\prod\limits_{\substack{i=1,\\ i\neq l_1,\dots, l_{\kFr+1}}}^{g+m}
\frac{(x-x_i)} {\prod\limits_{\iota=1}^{\kFr+1} (x_{l_{\iota}}-x_i)} .
\end{gather*}
Note that
\begin{gather*}
\sum_{\substack{l_{\iota}=1\\ l_{\iota} \neq l_1,\dots,\widehat{l}_\iota,\dots,l_{\kFr}}}^{g+m}
M_{l_1,\dots,\widehat{l}_\iota,\dots,l_{\kFr}}(x) = 0, \\
\sum_{\substack{l_{\iota}=1\\ l_{\iota} \neq l_1,\dots,\widehat{l}_\iota,\dots,l_{\kFr+1}}}^{g+m}
N_{l_1,\dots,l_\iota,\dots,l_{\kFr+1}}(x) = \frac{1}{\kFr+1}M_{l_1,\dots,\widehat{l}_\iota,\dots,l_{\kFr+1}}(x),
\end{gather*}
where $\widehat{l}_\iota$ denotes the eliminated index. Evidently, summation of
$N_{l_1,\dots,l_\iota,\dots,l_{\kFr+1}}$ over two or more indices brings to zero.

Instead of interpolation polynomial the following rational function~$\mathcal{G}$ is used
\begin{gather}\label{GDefm}
 \mathcal{G}(x) = - \frac{\gamma_x(x)}{\gamma_y(x)}.
\end{gather}
Substitution of \eqref{GDefm} for $y$ into $f$ from \eqref{CHEeq1}
leads to
\begin{equation*}
-\gamma_x(x)^2 + \gamma_y(x)^2 \mathcal{P}(x),
\end{equation*}
which is divisible by $\mathcal{F}$
due to the construction, and the quotient polynomial $\mathcal{H}$
has degree $g$, namely,
\begin{gather}\label{Hdivgm}
\mathcal{H}(x) = \frac{\gamma_y(x)^2 \mathcal{P}(x) -\gamma_x(x)^2 }{\mathcal{F}(x)}.
\end{gather}
Finally, the reduced divisor corresponding to $D_{g+m}$ is defined by
\begin{gather}\label{DivgmHG}
\mathcal{H}(x)=0,\qquad y = - \mathcal{G}(x),
\end{gather}
where $\mathcal{G}$ is given by \eqref{GDefm}.
If the form $y+\mathcal{I}(x)$ as in \eqref{JIPg1HI}
with polynomial $\mathcal{I}$ of degree $g-1$ is required, it can be constructed by the formula
\begin{gather}\label{SLAE}
y+\mathcal{I}(x) = \mathcal{H}(x) \big(y\nu_y(x) + \nu_x(x)\big)
+ \big(y\gamma_y(x) - \gamma_x(x)\big) \mathcal{M}(x),\\
\deg \nu_y = \deg \gamma_y-1,\qquad
\deg \nu_x = \deg \gamma_x-1,\qquad
\deg \mathcal{M} = g-1. \notag
\end{gather}
Unknown coefficients of polynomials $\nu_y$, $\nu_x$, and $\mathcal{M}$ of number $2g+\big[m-\frac{1}{2}\big]$
are found from the same number of equations arising as vanishing coefficients at monomials
$\big\{y x^{g+[(m-1)/2] -1},\ldots,\allowbreak y x, x^{2g+[m/2]-1}, \ldots, x^g\big\}$ and the unit coefficient at $y$.

\begin{rem}
In the definition \eqref{FRFefgm} of divisor $D_{g+m}$ the rational function $\mathcal{R}$ has weight $2g+m$,
and this is the minimal weight of a function whose $g+m$ roots can be chosen arbitrarily.
This function is required in order to obtain a polynomial of degree $g$ in \eqref{Hdivgm}.

Below we consider also the definition of a divisor by two polynomials \eqref{FLsyst}, which usually occurs in
cryptography oriented papers. In this case function $y-\mathcal{L}(x)$ has weight $2g+2m-2$, evidently it is
greater than the minimal if $m>2$. This means that the intersection of this function with the curve contains $2g+2m-2$ points, which is seen by substituting $\mathcal{L}$ for $y$ into $f(x,y)=0$.
So the complement divisor to $D_{g+m}$ in the intersection has degree $g+m-2$, and with $m>2$ formulas \eqref{GDefm}--\eqref{DivgmHG} with $\mathcal{G}$ replaced by $\mathcal{L}$ do not lead to a reduced divisor.
\end{rem}

\section{The reduction algorithm} \label{s:RedAlg}
Reduction of a degree $g+m$ divisor is realised directly in Section~\ref{ss:Divgm}, which is applicable to
a~divisor defined by \eqref{FRFefgm} or given as a collection of points. This realisation involves solution
of a system of linear algebraic equations \eqref{SLAE}. To avoid this type of computation
we suggest an iterative algorithm of reduction, involving only arithmetic operations on polynomials.

Recall that we deal with the
hyperelliptic curve defined by \eqref{CHEeq1}.
Let $D_{g+m} = \sum\limits_{k=1}^{g+m} P_k$ be a divisor to reduce, where $m \geq 1$,
and $P_k=(x_k,y_k)$. Reduction consists in finding a~divisor $\widetilde{D}_g =\sum\limits_{k=1}^g\widetilde{P}_k$ such that $\widetilde{D}_g - g\infty$ is equivalent to $D_{g+m}-(g+m)\infty$. The reduced divisor is defined by
\begin{itemize}\itemsep=0pt
\item a polynomial $\mathcal{H}(x)$ of degree $g$, vanishing at $\widetilde{P}_k=(\tilde x_k,\tilde y_k)$,
\item and an interpolation polynomial $\mathcal{I}(x)$ of degree $g-1$,
such that $\tilde{y}_k = \mathcal{I}(\tilde{x}_k)$.
\end{itemize}
The pair of polynomials $\mathcal{H}$, $\mathcal{I}$ defines divisor $\widetilde{D}_g$ uniquely
by the system
\begin{gather*}
 \mathcal{H}(x)=0, \qquad y = \mathcal{I}(x).
\end{gather*}

In \cite{bl05} and \cite{ShK} the close addition problem is
solved by means of the determinant construction.
Here we suggest a more effective solution.

Let a divisor $D$ of degree $g+m$ with $m>0$ be given.
\begin{enumerate}\itemsep=0pt
\item[(I)] If $\deg(D)=g+1$ the result is given by polynomials $\mathcal{H}$ and $\mathcal{I}$
defined by \eqref{JIPg1} if all $g+1$ points are distinct, by \eqref{JIPg1Dupl} if $P_{g+1}=P_1$,
and by \eqref{JIPg1Mult} in the case of $D=(g+1)P_1$.
\item[(II)] If $\deg(D)=g+2$ the result is given by polynomials $\mathcal{H}$ and $\mathcal{I}$
defined by \eqref{JIPg2}.
\item[(III)] If $\deg (D)>g+2$ then an iterative procedure is used.
\end{enumerate}
The iterative procedure is the following.
Dealing with a divisor $D$ of degree $g+m$ with $m>2$, one performs the following steps:

\textbf{Step 1.}
Start with any $g+1$ points, say $\{P_k=(x_k,y_k)\}_{k=1}^{g+1}$,
 and find polynomials $\mathcal{H}^{(1)}$, $\mathcal{I}^{(1)}$ by
 formulas \eqref{JIPg1H} and \eqref{JIPg1I}, or \eqref{JIPg1HDupl} and \eqref{JIPg1IDupl},
 or \eqref{JIPg1HMult} and \eqref{JIPg1IMult}. Then relations
\begin{gather*}
 \mathcal{H}^{(1)}(x) =0,\qquad
 y =- \mathcal{I}^{(1)}(x)
\end{gather*}
define $g$ points $\big\{\widetilde{P}_k^{(1)}=\big(\tilde{x}_k^{(1)},\tilde{y}_k^{(1)}\big)\big\}_{k=1}^g$
on curve \eqref{CHEeq1},
which replace the chosen $g+1$ points of the divisor $D$. In this way a new divisor $D_{g+m-1}$
of degree $g+m-1$ is constructed.

\textbf{Step 2.} Suppose that a divisor $D_{g+m-l}$ of degree $g+m-l$ is found, which consists of $g$ points
 $\big\{\widetilde{P}_k^{(l)}=\big(\tilde{x}_k^{(l)},\tilde{y}_k^{(l)}\big)\big\}_{k=1}^g$
 defined by
 \begin{gather}\label{HIgDef}
 \mathcal{H}^{(l)}(x) =0,\qquad
 y =- \mathcal{I}^{(l)}(x),
\end{gather}
 and the remaining $m-l$ points of $D$ which form a divisor $D'_{m-l}$. Let $P_{g+l+1}$ be a point from~$D'_{m-l}$. With a collection of points $\big\{\widetilde{P}_k^{(l)}\big\}_{k=1}^g \cup \{P_{g+l+1}\}$ construct new polynomials
\begin{subequations}\label{FGlDef}
\begin{gather}
\mathcal{F}^{(l)}(x) = (x-x_{g+l+1}) \mathcal{H}^{(l)}(x), \label{FdefNext} \\
\mathcal{G}^{(l)}(x) = \mathcal{I}^{(l)}(x)
 + \big(y_{g+l+1} - \mathcal{I}^{(l)}(x_{g+l+1})\big)
 \dfrac{\mathcal{H}^{(l)}(x)}{\mathcal{H}^{(l)}(x_{g+l+1})} \label{GdefNext}
\end{gather}
\end{subequations}
of degrees $g+1$ and $g$, respectively. Evidently, the system{\samepage
\begin{gather}\label{FGeqs}
 \mathcal{F}^{(l)}(x) = 0,\qquad y = \mathcal{G}^{(l)}(x)
\end{gather}
has $g+1$ solutions at points $\big\{\widetilde{P}_k^{(l)}\big\}_{k=1}^g \cup \{P_{g+l+1}\}$.}

\textbf{Step 3.} Next, reduce the polynomials $\mathcal{F}^{(l)}$, $\mathcal{G}^{(l)}$ to
polynomials $\mathcal{H}^{(l+1)}$ and $\mathcal{I}^{(l+1)}$ of deg\-rees~$g$ and $g-1$
\begin{subequations}
\begin{gather}
\mathcal{H}^{(l+1)}(x) = \frac{\mathcal{P}(x) - \mathcal{G}^{(l)}(x)^2}{\mathcal{F}^{(l)}(x)}, \label{HdefNext} \\
\mathcal{I}^{(l+1)}(x) = \mathfrak{g}_0 \mathcal{H}^{(l+1)}(x)-\mathcal{G}^{(l)}(x), \label{IdefNext}
\end{gather}
\end{subequations}
where $\mathfrak{g}_0$ is the coefficient of $x^g$ in $\mathcal{G}^{(l)}(x)$.
Note that $\mathcal{P}(x) - \mathcal{G}^{(l)}(x)^2$ is divisible by $\mathcal{F}^{(l)}(x)$
due to \eqref{FGeqs} and the fact that $\big\{\widetilde{P}_k^{(l)}\big\}_{k=1}^g \cup \{P_{g+l+1}\}$
are points of the curve $y^2=\mathcal{P}(x)$. The system
\begin{gather*}
 \mathcal{H}^{(l+1)}(x) = 0,\qquad y = \mathcal{I}^{(l+1)}(x)
\end{gather*}
defines $g$ points $\big\{\widetilde{P}_k^{(l+1)}=\big(\tilde{x}_k^{(l+1)},\tilde{y}_k^{(l+1)}\big)\big\}_{k=1}^g$,
which together with the remaining $m-l-1$ points of $D$ form a new divisor $D_{g+m-l-1}$.
If $l+1< m$, return to Step~2.
This step is the same as the reduction algorithm given by Cantor~\cite{Ca}.

The iterative procedure described above uses reduction by $g+1$ points at each step.
One can use a different strategy, for example with $m=2\kappa$ the shortest iterative process is to apply
reduction by $g+2$ points $\kappa$ times, and with $m=2\kappa+1$ is to apply one reduction by $g+1$ points and $\kappa$ reductions by $g+2$ points.

\begin{rem}Note that computation of $\mathcal{I}^{(l)}(x)$ at each step is unnecessary.
It is enough to replace \eqref{GdefNext} by
\begin{gather*}
\mathcal{G}^{(l)}(x) = -\mathcal{G}^{(l-1)}(x)
 + \big(y_{g+l+1} + \mathcal{G}^{(l-1)}(x)\big)
 \dfrac{\mathcal{H}^{(l)}(x)}{\mathcal{H}^{(l)}(x_{g+l+1})},
\end{gather*}
then \eqref{IdefNext} can be skipped.
\end{rem}

\begin{rem}
The reduction algorithm is expressed in terms of divisors since this explanation is geometric and the most evident.
Despite the description, finding divisor is not required. Instead, coefficients of polynomials
$\mathcal{H}^{(l)}$ and $\mathcal{I}^{(l)}$ completely define the intermediate divisor of degree $g$,
as well as polynomials $\mathcal{F}^{(l)}$ and $\mathcal{G}^{(l)}$ define the intermediate divisor of degree $g+1$.
So the algorithm can be realised over any field.
\end{rem}

\begin{rem}
Let us point out that Step 2 of the reduction algorithm provides a procedure of adding one point
to a divisor of degree $g$, or to any greater degree divisor defined by a system in the form \eqref{HIgDef}.
So the reduction algorithm solves the problem of adding a non-special divisor
defined by a pair of polynomials and a special divisor given as a collection of points.
A collection of points is added by one point according to the algorithm.

Another approach to addition is presented in the next section.
\end{rem}

\begin{rem}
One can add one point to a special divisor using formulas \eqref{FGlDef}.
The special divisor is supposed to be defined as in Proposition~\ref{p:HIdefDiv}.
Then~\eqref{FGeqs} represents the resulting divisor directly.
\end{rem}

\textbf{Application in cryptography.} %\label{ss:CryptoAppl}
We would like to point out two practical setups
which serve as hyperelliptic cryptography algorithms.

I.\ Alice and Bob choose a non-special divisor $D_g$, which is public.
Alice chooses a~number~$n_A$ and keeps in secret, and Bob chooses a number $n_B$
and keeps in secret. Next, they are using the reduction algorithm described above to obtain a reduced form for the divisors~$n_A D_g$ and~$n_B D_g$. These are the divisors they exchange.
Once Alice gets the divisor~$n_B D_g$ from Bob (reduced form), she computes $n_A(n_B D_g)$.
On the other hand, Bob computes the same divisor as~$n_B(n_A D_g)$.
The reduced form of this divisor is the shared secret of Bob and Alice.
This is an implementation of the Diffie--Hellman exchange with the help of the reduction algorithm introduced above.

The first step of the reduction algorithm for a scalar multiple $n_j D_g$
is provided by Theo\-rem~\ref{T:Red3}.
Divisor $D_g$ is assumed to consist of distinct points, and
only on the first step a~collection of $g+1$ points contains two equal points. It is very unlikely, that
a reduced divisor on any step contains points coinciding with $D_g$.
Points of the divisor $D_g$ can be chosen
from a~desired finite field to produce polynomials with coefficients from this field.

II.\ Alice and Bob choose a non-special divisor $D_g$ of the form $g P_1$.
Then Alice computes the reduced form of $n_A (g P_1)$,
and Bob computes the reduced form of $n_B (g P_1)$.
They exchange the reduced forms of their divisors.
The first step of the reduction algorithm in this case is provided by Theorem~\ref{T:Red4},
on further steps Theorem~\ref{T:Red1} or~\ref{T:Red2} is used.
After exchange Alice and Bob compute the reduced form of divisor $n_A n_B (g P_1)$
which is the shared secret.

\section{The addition algorithm} \label{s:AddProc}
Now we suggest a procedure to solve the addition problem.
We start with a different setup. Let two non-special divisors~$D_{g+m_1}$ and $D_{g+m_2}$
of degrees $g+m_1$ and $g+m_2$
are defined by two polynomials each. Namely,
polynomials $\mathcal{F}_1$ and $\mathcal{L}_1$ of degrees
$g+m_1$ and $g+m_1-1$ define~$D_{g+m_1}$ by the system
\begin{gather*}
\mathcal{F}_1(x) = 0,\qquad y = \mathcal{L}_1(x);
\end{gather*}
and polynomials $\mathcal{F}_2$ and $\mathcal{L}_2$ of degrees
$g+m_2$ and $g+m_2-1$ define $D_{g+m_2}$ by the system
\begin{gather*}
\mathcal{F}_2(x) = 0,\qquad y = \mathcal{L}_2(x).
\end{gather*}
Degrees of polynomials $\mathcal{L}_1$ and $\mathcal{L}_2$ are justified as follows.
Suppose we are
given points $\sum\limits_{k=1}^{g+m}\! (x_k,y_k)\allowbreak = D_{g+m}$, then
a polynomial $\mathcal{L}$ such that $y-\mathcal{L}(x)$ vanishes on $D_{g+m}$
is constructed by the determinant formula on the monomials $\big\{1,x,\dots,x^{g+m-1},y\big\}$. Since $(x_k,y_k)$ are points of a~hyperelliptic curve there is no linear relation between coordinates~$y_k$, and so the coefficient at~$x^{g+m-1}$ does not vanish. Obviously,
$y-\mathcal{L}(x)$ is not the minimal weight function to define~$D_{g+m}$.

\begin{Lemma}\label{L:MinF}
Let a non-special divisor of degree $D_{g+m}$ is defined by two
polynomials: $\mathcal{F}$ of degree $g+m$ and $\mathcal{L}$ of degree equal to or greater than $g+m-1$
as follows
\begin{gather}\label{FLsyst}
\mathcal{F}(x) = 0,\qquad y = \mathcal{L}(x).
\end{gather}
Then a rational function $\mathcal{R}$ of the minimal weight $2g+m$ exists,
and the system
\begin{gather*}
\mathcal{F}(x) = 0,\qquad \mathcal{R}(x,y)=0
\end{gather*}
defines $D_{g+m}$ equivalently.
\end{Lemma}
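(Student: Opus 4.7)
\emph{Plan.} I would establish existence of $\mathcal{R}$ by a dimension count on the linear system at infinity, and then verify that the new system defines the same divisor by exploiting non-speciality.

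First, I would use the Weierstrass non-gap semigroup $\langle 2,2g+1\rangle$ at $\infty$ (associated to the branch point there) to see that $\dim L\bigl((2g+m)\infty\bigr)=g+m+1$, with basis the monomials $1,x,\ldots,x^g,y,x^{g+1},xy,\ldots$ of weight at most $2g+m$. Every element of this space has the form $\mathcal{R}(x,y)=y\gamma_y(x)+\gamma_x(x)$ with $\deg\gamma_y\le[(m-1)/2]$ and $\deg\gamma_x\le g+[m/2]$, just as in Section~\ref{ss:Divgm}. The requirement that $\mathcal{R}$ vanish at the $g+m$ points of $D_{g+m}$ is $g+m$ linear conditions, and so produces a nonzero solution. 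Minimality of the weight $2g+m$ follows from $\dim L\bigl((2g+m-1)\infty\bigr)=g+m$: non-speciality of $D_{g+m}$ in the sense of Section~\ref{ss:NSpDiv} makes the $g+m$ point evaluations linearly independent on this space, so no nonzero function of smaller weight can vanish on $D_{g+m}$.

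Next, I would reduce the equivalence statement to a single claim. Any common solution $(x_0,y_0)$ of $\mathcal{F}(x)=0$ and $\mathcal{R}(x,y)=0$ has $x_0=x_k$ for some $k$, and then $y_0\gamma_y(x_k)+\gamma_x(x_k)=0$ forces $y_0=y_k$ provided $\gamma_y(x_k)\ne0$. So it remains to show $\gamma_y(x_k)\ne0$ for every $k=1,\ldots,g+m$.

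This last claim is the crux and the main obstacle. Suppose for contradiction that $\gamma_y(x_1)=0$; since $\mathcal{R}(x_1,y_1)=0$, also $\gamma_x(x_1)=0$, and so $(x-x_1)$ divides $\mathcal{R}$ as a polynomial in $x,y$. The quotient $\widetilde\mathcal{R}=\mathcal{R}/(x-x_1)$ lies in $L\bigl((2g+m-2)\infty\bigr)$ and vanishes on $D_{g+m}\setminus\{(x_1,y_1)\}$, which has degree $g+m-1\ge g$ and contains no involution-conjugate pairs, hence is itself non-special by the paper's convention in Section~\ref{ss:NSpDiv}. Since $\dim L\bigl((2g+m-2)\infty\bigr)=g+m-1$ matches the number of vanishing conditions, non-speciality again gives linearly independent evaluations and forces $\widetilde\mathcal{R}\equiv0$, contradicting $\mathcal{R}\ne0$. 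The delicate point is the passage from the $\sigma$-function definition of non-speciality to the required linear independence of point evaluations; this reduces to the hyperelliptic characterization that an effective divisor of degree at least $g$ with no involution-conjugate points satisfies $h^1=0$, since the canonical linear system is $(g-1)g^1_2$ and such a divisor cannot be subordinate to any effective canonical divisor.
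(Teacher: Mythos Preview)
Your approach is correct and genuinely different from the paper's. The paper constructs $\mathcal{R}$ explicitly as an element of the ideal generated by $\mathcal{F}$ and $y-\mathcal{L}$: after reducing $\mathcal{L}$ modulo $\mathcal{F}$ to degree $g+m-1$, it sets
\[
\mathcal{R}(x,y)=\mathcal{N}(x)\,\mathcal{F}(x)+(y-\mathcal{L}(x))\,\mathcal{M}(x),\qquad
\deg\mathcal{N}=[(m-1)/2]-1,\quad \deg\mathcal{M}=[(m-1)/2],
\]
and counts $2[(m-1)/2]+1$ unknown coefficients against $2[(m-1)/2]$ linear conditions coming from the excess powers of $x$ on the right-hand side. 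This is elementary linear algebra, fully constructive (hence suited to the algorithmic aims of the paper), and makes one direction of the equivalence immediate since $\mathcal{R}$ visibly lies in the ideal $(\mathcal{F},\,y-\mathcal{L})$. Your Riemann--Roch argument is more conceptual, proves minimality of the weight cleanly, and---unlike the paper---actually addresses why the new system cuts out exactly $D_{g+m}$: your $\gamma_y(x_k)\ne 0$ step is, in the paper's notation, $\mathcal{M}(x_k)\ne 0$, which the paper does not verify. One small caution on your closing justification: the condition $h^1(E)=0$ is not literally what forces injectivity of evaluation on $L\bigl((n+g-1)\infty\bigr)$; by Riemann--Roch the needed vanishing is $\ell\bigl(E-(n-g+1)\infty\bigr)=0$, which holds because otherwise $\mathcal{A}(E)$ would equal $\mathcal{A}(A)$ for some effective $A$ of degree $g-1$ and hence lie on the theta divisor, contradicting non-speciality in the sense of Section~\ref{ss:NSpDiv}. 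This is a matter of phrasing rather than a real gap.
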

\begin{proof}
The rational function $\mathcal{R}$ of weight $2g+m$ has the form \eqref{R2gm}.
If $\deg \mathcal{L} \geqslant \deg \mathcal{F}$, then~$\mathcal{L}$ in~\eqref{FLsyst}
can be replaced by $\widetilde{\mathcal{L}} = \mathcal{L} \!\!\mod \mathcal{F}$.
Suppose $\deg \mathcal{L} = g+m-1$, then $\mathcal{R}$ is constructed in the form
\begin{gather*}
\begin{split}&
\mathcal{R}(x,y) = \mathcal{N}(x) \mathcal{F}(x) +
\big(y-\mathcal{L}(x)\big) \mathcal{M}(x),\\
& \deg \mathcal{N} = [(m-1)/2]-1, \qquad \deg \mathcal{M} = [(m-1)/2].\end{split}
\end{gather*}
On the right hand side extra powers of $x$ arises, namely from $x^{g+[m/2]+1}$ to $x^{g+m-1+[(m-1)/2]}$,
coefficients of which should vanish. Thus, for $2[(m-1)/2]+1$ unknowns one obtains equations of number
\[ m+[(m-1)/2]-[m/2] -1 = 2[(m-1)/2].\]
So the function $\mathcal{R}$ up to a constant multiple is found.
\end{proof}

\begin{Lemma}\label{L:AddL}
Let $\mathcal{F}_1$, $\mathcal{L}_1$ and $\mathcal{F}_2$, $\mathcal{L}_2$
be two pairs of polynomials, at that $\deg \mathcal{L}_i < \deg \mathcal{F}_i$
and $\gcd(\mathcal{F}_1,\mathcal{F}_2)=1$.
Then a polynomial $\mathcal{L}$ exists such that $\mathcal{L}_i = \mathcal{L} \!\!\mod \mathcal{F}_i$, $i=1,2$.
\end{Lemma}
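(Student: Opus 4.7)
The plan is to apply the Chinese Remainder Theorem in the polynomial ring $\Complex[x]$, since the hypotheses $\deg \mathcal{L}_i < \deg \mathcal{F}_i$ and $\gcd(\mathcal{F}_1,\mathcal{F}_2)=1$ are precisely its ingredients. Concretely, I would argue existence by an explicit construction rather than invoking CRT as a black box, so the formula is usable in the addition algorithm that follows.

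First, because $\Complex[x]$ is a Euclidean domain and $\gcd(\mathcal{F}_1,\mathcal{F}_2)=1$, the extended Euclidean algorithm yields polynomials $A$ and $B$ with
\begin{gather*}
A(x) \mathcal{F}_1(x) + B(x) \mathcal{F}_2(x) = 1.
\end{gather*}
Next, I would set
\begin{gather*}
\mathcal{L}(x) = \mathcal{L}_2(x) A(x) \mathcal{F}_1(x) + \mathcal{L}_1(x) B(x) \mathcal{F}_2(x)
\end{gather*}
and verify both congruences directly: modulo $\mathcal{F}_1$ the first summand vanishes and, using the Bezout identity to replace $B \mathcal{F}_2$ by $1 - A \mathcal{F}_1$, the second summand reduces to $\mathcal{L}_1$; the check for $\mathcal{F}_2$ is symmetric. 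Finally, replacing $\mathcal{L}$ by $\mathcal{L} \bmod (\mathcal{F}_1 \mathcal{F}_2)$ gives a representative of degree strictly less than $\deg \mathcal{F}_1 + \deg \mathcal{F}_2$, and standard uniqueness-of-remainder in the Euclidean algorithm shows this representative is unique with this degree bound; any other solution differs from it by a multiple of $\mathcal{F}_1 \mathcal{F}_2$.

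There is no real obstacle here: the content of the lemma is the classical CRT for $\Complex[x]$, and every step is mechanical. The only point that deserves care is that the Bezout coefficients $A$, $B$ genuinely exist as polynomials (not just as rational functions), which follows from the Euclidean structure and the coprimality hypothesis. If one wished to sharpen the statement to guarantee the minimal-degree $\mathcal{L}$, one would also remark that the construction above, after reduction modulo $\mathcal{F}_1 \mathcal{F}_2$, yields exactly such a canonical representative.
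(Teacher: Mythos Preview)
Your proof is correct and matches the paper's almost exactly: the paper also produces B\'ezout coefficients $\mathcal{M}_1,\mathcal{M}_2$ with $\mathcal{M}_2\mathcal{F}_1+\mathcal{M}_1\mathcal{F}_2=1$ and then sets $\mathcal{L}=\mathcal{M}_2\mathcal{F}_1\mathcal{L}_2+\mathcal{M}_1\mathcal{F}_2\mathcal{L}_1$, verifying the congruences in the same way. The only cosmetic differences are that the paper finds the B\'ezout coefficients by equating coefficients (rather than citing the extended Euclidean algorithm) and does not bother with the reduction modulo $\mathcal{F}_1\mathcal{F}_2$ or the uniqueness remark you add at the end.
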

The lemma is known as the Chinese remainder theorem, see for example \cite[Theorem 16.19]{Shoup}.
For the sake of completeness a proof is provided.
\begin{proof}
As $\gcd(\mathcal{F}_1,\mathcal{F}_2)=1$, there exist polynomials
$\mathcal{M}_1$ and $\mathcal{M}_2$ such that
\begin{equation}\label{MFrel}
\mathcal{M}_2(x)\mathcal{F}_1(x)+\mathcal{M}_1(x)\mathcal{F}_2(x)=1.
\end{equation}
A polynomial $\mathcal{M}_i$ can be taken of degree $\mathcal{F}_i-1$,
then $\deg \mathcal{F}_1+\deg \mathcal{F}_2$ unknown coefficients are found by
equating coefficients on the both sides of the relation \eqref{MFrel}.
Then polynomial
\begin{equation}\label{LdefAdd}
\mathcal{L}(x) = \mathcal{M}_2(x)\mathcal{F}_1(x) \mathcal{L}_2(x)
+ \mathcal{M}_1(x)\mathcal{F}_2(x) \mathcal{L}_1(x)
\end{equation}
satisfies the lemma conditions. Indeed,
\begin{align*}
\mathcal{L}(x) - \mathcal{L}_1(x)
& =\mathcal{M}_2(x)\mathcal{F}_1(x) \mathcal{L}_2(x)
+ \mathcal{M}_1(x)\mathcal{F}_2(x) \mathcal{L}_1(x) - \mathcal{L}_1(x) \\
& = \mathcal{M}_2(x)\mathcal{F}_1(x) \big(\mathcal{L}_2(x) - \mathcal{L}_1(x) \big)
\end{align*}
is divisible by $\mathcal{F}_1$ as required. The same is true for $\mathcal{F}_2$.
\end{proof}

The algorithm of additing two divisors $D_{g+m_1}$ and $D_{g+m_2}$ consists of the following steps
\begin{enumerate}\itemsep=0pt
\item The sum $D_{2g+m_1+m_2}$ of $D_{g+m_1}$ and $D_{g+m_2}$ is uniquely defined by
two polynomials~$\mathcal{F}$ and~$\mathcal{L}$ such that $\mathcal{F}(x) = \mathcal{F}_1(x) \mathcal{F}_2(x)$
at that $\gcd(\mathcal{F}_1,\mathcal{F}_2)\,{=}\,1$,
and $\mathcal{L}$ is obtained by \eqref{LdefAdd}. Note that
$\deg \mathcal{L} = 3g + m_1+m_2+\max(m_1,m_2)-2$. As seen from the proof of Lemma~\ref{L:AddL},
$\mathcal{L}$~vanishes on the both divisors.
This step coincides with a special case of Cantor's composition algorithm \cite[p.~98]{Ca}.
\item By Lemma~\ref{L:MinF} a rational function $\mathcal{R}$ of weight $2g+m_1+m_2$
vanishing on $D_{2g+m_1+m_2}$ is constructed from $\mathcal{F}$ and $\mathcal{L}$.
\item The reduced problem for $D_{2g+m_1+m_2}$ defined by the polynomial $\mathcal{F}$
and the rational function~$\mathcal{R}$ is solved as in Section~\ref{ss:Divgm}.
\end{enumerate}
Another approach to solution of the addition problem for two divisors of degree $g$ can be found in
\cite[Theorem 1.23, p.~75--76 in Russian version]{bl05}.

\begin{rem}The addition algorithm is also applicable to special divisors of degree $m<g$
provided that their sum is a non-special divisor of degree greater than~$g$.
If the resulting divisor has degree equal to~$g$ or less
(and has no points in involution as we explained in preliminaries),
only Step~1 is needed.
\end{rem}

\section{Conclusion}
The reduction problem is solved explicitly for divisors of degrees $g + 1$ and $g + 2$
in the case of all distinct points (Theorems~\ref{T:Red1} and \ref{T:Red2}),
for a divisor of degree $g + 1$ with duplication (Theorem~\ref{T:Red3}), for
a divisor of the form $(g + 1)P$ (Theorem~\ref{T:Red4}).
Polynomials defining a reduced divisor are expressed in terms of the points of an initial divisor,
and their coefficients are computed directly.

It is worth to note that the mentioned polynomials also serve as a solution of the Jacobi inversion problem
for a reduced divisor, see Remarks~\ref{wpFuncg1} and \ref{wpFuncg2}, similar relations hold in the other cases.
And so the polynomial coefficients give values $\wp\big(u(D)\big)$ on an initial divisor~$D$ for~$2g$ functions which form a basis of the differential field of Abelian functions on Jacobian of the curve.
The demand for such values arises in some problems of mathematical physics.
Until now this approach to computation of $\wp$ functions has not appeared in the literature.

The reduction problem introduces the relation of equivalence on the space of non-special divisors on a curve.
To every non-special divisor an equivalent reduced divisor is assigned, the latter serves as a representative of an equivalence class consisting of all divisors reduced to this representative reduced divisor.
A reduced divisor maps uniquely to a point of Jacobian of the curve, and its equivalence class
maps to the same point of Jacobian. So a many-to-one mapping from the space of non-special divisors
to Jacobian arises. This idea can be used to compute $\wp$ functions on arbitrary non-special divisors
and solve the generalised Jacobi inversion problem.

The proposed iterative reduction algorithm has the advantage that all steps are realised in terms of polynomials obtained by means of arithmetic operations of addition, multiplication and division, and so the algorithm preserves the field to which coefficients of initial polynomials belong. The initial divisor is supposed to belong to the same field. The algorithm can also be interpreted as addition of a non-special divisor defined by a pair of polynomials and a special or non-special divisor given as a collection of points. Two scenarios of hyperelliptic cryptography algorithms on its base were suggested.

A solution of the reduction problem which does not involve points is also given
for a~deg\-ree~$g+m$ divisor. Two ways to define the divisor are considered: by two polynomials,
and by a~polynomial and a rational function of the minimal weight.
The relation between these two types of definition is described, as well as the necessity to use
the rational function of the minimal weight in order to find the reduced divisor.
And the proposed addition algorithm, whose first step is the standard addition algorithm producing two polynomials for the resulting divisor, is completed with finding a rational function of the minimal weight
and the reduction problem.

\subsection*{Acknowledgements}
The authors are thankful to the referees for the comments which had improved the paper substantially.

\pdfbookmark[1]{References}{ref}
\LastPageEnding

\end{document}